\newcommand\restr[2]{{% we make the whole thing an ordinary symbol
		\left.\kern-\nulldelimiterspace % automatically resize the bar with \right
		#1 % the function
		\vphantom{\big|} % pretend it's a little taller at normal size
		\right|_{#2} % this is the delimiter
}}
\newcommand{\nc}{\newcommand}
\newcommand{\suchthat}{\;\ifnum\currentgrouptype=16 \middle\fi|\;}
\nc{\zg}{\mathfrak{z} } \nc{\ngo}{\mathfrak{n} } \nc{\kg}{\mathfrak{k} }  
\nc{\mg}{\mathfrak{m} } \nc{\bg}{\mathfrak{b} } \nc{\ggo}{\mathfrak{g} } \nc{\hgo}{\mathfrak{h} }
\nc{\ggob}{\overline{\mathfrak{g}} } \nc{\sog}{\mathfrak{so} }
\nc{\sug}{\mathfrak{su} } \nc{\spg}{\mathfrak{sp} } \nc{\slg}{\mathfrak{sl} }
\nc{\glg}{\mathfrak{gl} } \nc{\cg}{\mathfrak{c} } \nc{\rg}{\mathfrak{r} }
\nc{\hg}{\mathfrak{h} } \nc{\tg}{\mathfrak{t} } \nc{\ug}{\mathfrak{u} }
\nc{\dg}{\mathfrak{d} } \nc{\ag}{\mathfrak{a} } \nc{\pg}{\mathfrak{p} }
\nc{\sg}{\mathfrak{s} } \nc{\affg}{\mathfrak{aff} } \nc{\qg}{\mathfrak{q} }
\nc{\pca}{\mathcal{P}} \nc{\nca}{\mathcal{N}} \nc{\lca}{\mathcal{L}}
\nc{\oca}{\mathcal{O}} \nc{\mca}{\mathcal{M}} \nc{\tca}{\mathcal{T}}
\nc{\aca}{\mathcal{A}} \nc{\cca}{\mathcal{C}} \nc{\gca}{\mathcal{G}}
\nc{\sca}{\mathcal{S}} \nc{\hca}{\mathcal{H}} \nc{\bca}{\mathcal{B}}
\nc{\dca}{\mathcal{D}} \nc{\val}{\operatorname{val}}
\nc{\Spec}{Spec}
\nc{\vp}{\varphi} \nc{\ddt}{\frac{d}{dt}} \nc{\dds}{\frac{d}{ds}}
\nc{\dpar}{\frac{\partial}{\partial t}} \nc{\im}{\mathrm{i}}
\nc{\SO}{\mathrm{SO}} \nc{\Spe}{\mathrm{Sp}} \nc{\Sl}{\mathrm{SL}}
\nc{\SU}{\mathrm{SU}} \nc{\Or}{\mathrm{O}} \nc{\U}{\mathrm{U}} \nc{\Gl}{\mathrm{GL}}
\nc{\Se}{\mathrm{S}} \nc{\Cl}{\mathrm{Cl}} \nc{\Spein}{\mathrm{Spin}}
\nc{\Pin}{\mathrm{Pin}} \nc{\G}{\mathrm{GL}_n(\RR)} \nc{\g}{\mathfrak{gl}_n(\RR)}
\nc{\RR}{{\Bbb R}} \nc{\HH}{{\Bbb H}} \nc{\CC}{{\Bbb C}} \nc{\ZZ}{{\Bbb Z}}
\nc{\FF}{{\Bbb F}} \nc{\NN}{{\Bbb N}} \nc{\QQ}{{\Bbb Q}} \nc{\PP}{{\Bbb P}} \nc{\OO}{{\Bbb O}}
\nc{\vs}{\vspace{.2cm}} \nc{\vsp}{\vspace{1cm}} \nc{\ip}{\langle\cdot,\cdot\rangle}
\nc{\ipp}{(\cdot,\cdot)} \nc{\la}{\langle} \nc{\ra}{\rangle} \nc{\unm}{\tfrac{1}{2}}
\nc{\unc}{\tfrac{1}{4}} \nc{\und}{\tfrac{1}{16}} \nc{\no}{\vs\noindent}
\nc{\lam}{\Lambda^2(\RR^n)^*\otimes\RR^n} \nc{\tangz}{{\rm T}^{\rm Zar}}
\nc{\nor}{{\sf n}}  \nc{\mum}{/\!\!/} \nc{\kir}{/\!\!/\!\!/}
\nc{\Ri}{\tfrac{4\Ric_{\mu}}{||\mu||^2}} \nc{\ds}{\displaystyle}
\nc{\ben}{\begin{enumerate}} \nc{\een}{\end{enumerate}} \nc{\f}{\frac}
\nc{\lb}{[\cdot,\cdot]} \nc{\isn}{\tfrac{1}{||v||^2}}
\nc{\gkp}{(\ggo=\kg\oplus\pg,\ip)} \nc{\ukh}{(\ug=\kg\oplus\hg,\ip)}
\nc{\tgkp}{(\tilde{\ggo}=\kg\oplus\pg,\ip)}
\nc{\wt}{\widetilde} \nc{\mm}{M}
\nc{\iop}{\mathtt{i}} \nc{\jop}{\mathtt{j}}
\nc{\Hess}{\operatorname{Hess}} \nc{\ad}{\operatorname{ad}}
\nc{\Ad}{\operatorname{Ad}} \nc{\rank}{\operatorname{rank}}
\nc{\Irr}{\operatorname{Irr}} \nc{\End}{\operatorname{End}}
\nc{\Aut}{\operatorname{Aut}} \nc{\Inn}{\operatorname{Inn}}
\nc{\Aff}{\operatorname{Aff}} \nc{\aff}{\operatorname{aff}}
\nc{\Der}{\operatorname{Der}} \nc{\Ker}{\operatorname{Ker}}
\nc{\Iso}{\operatorname{Iso}} \nc{\Diff}{\operatorname{Diff}}
\nc{\Lie}{\operatorname{L}} \nc{\tr}{\operatorname{tr}} \nc{\dif}{\operatorname{d}}
\nc{\sen}{\operatorname{sen}} \nc{\modu}{\operatorname{mod}}
\nc{\CRic}{\operatorname{PP}} \nc{\Cric}{\operatorname{P}} \nc{\Ricci}{\operatorname{Ric}}
\nc{\sym}{\operatorname{sym}} \nc{\herm}{\operatorname{herm}} \nc{\symac}{\operatorname{sym^{ac}}}
\nc{\symc}{\operatorname{sym^{c}}} \nc{\scalar}{\operatorname{sc}}
\nc{\grad}{\operatorname{grad}} \nc{\ricci}{\operatorname{Rc}}
\nc{\Nor}{\operatorname{Norm}}  \nc{\ricc}{\operatorname{Rc^{c}}}
\nc{\Ricc}{\operatorname{Ric^{c}}} \nc{\ricac}{\operatorname{Rc^{ac}}}
\nc{\Ricac}{\operatorname{Ric^{ac}}} \nc{\Riem}{\operatorname{Rm}}
\nc{\riccig}{\operatorname{ric^{\gamma}}} \nc{\Rin}{\operatorname{M}}
\nc{\Le}{\operatorname{L}} \nc{\tang}{\operatorname{T}}
\nc{\level}{\operatorname{level}} \nc{\rad}{\operatorname{r}}
\nc{\abel}{\operatorname{ab}} \nc{\CH}{\operatorname{CH}} \nc{\Cone}{\operatorname{C}} \nc{\CCone}{\operatorname{CC}}
\nc{\mcc}{\operatorname{mcc}} \nc{\Adj}{\operatorname{Adj}}
\nc{\Order}{\operatorname{O}}  \nc{\inj}{\operatorname{inj}} \nc{\proy}{\operatorname{pr}}
\nc{\vol}{\operatorname{vol}} \nc{\Diag}{\operatorname{Dg}} \nc{\Diagg}{\operatorname{Diag}}
\nc{\Ima}{\operatorname{Im}} \nc{\Rea}{\operatorname{Re}}
\nc{\spann}{\operatorname{span}}
\theoremstyle{plain}
\newtheorem{theorem}{Theorem}[section]
\newtheorem{proposition}[theorem]{Proposition}
\newtheorem{corollary}[theorem]{Corollary}
\newtheorem{lemma}[theorem]{Lemma}
\newtheorem{question}{Question}
\theoremstyle{definition}
\newtheorem{definition}[theorem]{Definition}
\theoremstyle{remark}
\newtheorem{example}[theorem]{Example}
\title[Simply transitive NIL-affine actions of solvable Lie groups]{Simply transitive NIL-affine actions \\ of solvable Lie groups}
\author{Jonas Der\'e} 
\address{KU Leuven Kulak, E. Sabbelaan 53, BE-8500 Kortrijk, Belgium}
\email{jonas.dere@kuleuven.be}
\author{Marcos Origlia}
\address{School of Mathematical Sciences, Monash University, VIC 3800, Australia}
\email{marcosoriglia@gmail.com, marcos.origlia@monash.edu}
\thanks{The first author is supported by a postdoctoral fellowship of the Research Foundation - Flanders (FWO). The second author was supported by the Research Foundation - Flanders (FWO Project G.0F93.17N), CONICET (Argentina) and an ARC (Australia) DP190100317}
\begin{document}

\maketitle

\begin{abstract}
Every simply connected and connected solvable Lie group $G$ admits a simply transitive action on a nilpotent Lie group $H$ via affine transformations. Although the existence is guaranteed, not much is known about which Lie groups $G$ can act simply transitive on which Lie groups $H$. So far the focus was mainly on the case where $G$ is also nilpotent, leading to a characterization depending only on the corresponding Lie algebras and related to the notion of post-Lie algebra structures.

This paper studies two different aspects of this problem. First, we give a method to check whether a given action $\rho: G \to \Aff(H)$ is simply transitive by looking only at the induced morphism $\varphi: \ggo \to \aff(\hgo)$ between the corresponding Lie algebras. Secondly, we show how to check whether a given solvable Lie group $G$ acts simply transitive on a given nilpotent Lie group $H$, again by studying properties of the corresponding Lie algebras. The main tool for both methods is the semisimple splitting of a solvable Lie algebra and its relation to the algebraic hull, which we also define on the level of Lie algebras. As an application, we give a full description of the possibilities for simply transitive actions up to dimension $4$.
\end{abstract}

\tableofcontents

\section{Introduction}

An old question by Milnor \cite{miln77-1} asked whether or not a simply connected and connected (hereinafter called $1$-connected) solvable Lie group $G$ of dimension $n$ admits a representation into the group $\Aff(\RR^n)$, letting $G$ act simply transitively on $\RR^n$. It is well-known that any linear Lie group homeomorphic to $\RR^n$ must be solvable and hence solvable Lie groups are the only possibility for such an action. The answer on Milnor's question is negative, with the first (nilpotent) example exhibited in dimension $11$ by Y.~Benoist in \cite{beno92-1,beno95-1} and later families of nilpotent examples in dimension $10$ by D.~Burde and F.~Grunewald in \cite{burd96-1,bg95-1}. Both constructions were based on the equivalence between simply transitive affine actions on $\RR^n$ and the existence of a complete pre-Lie algebra structure on the corresponding Lie algebra, described in \cite{kim86-1}.

In order to provide a positive answer to Milnor's question a more general setting was considered, namely the one of NIL-affine actions $\rho: G \to \Aff(H)$ where $H$ is a $1$-connected nilpotent Lie group and $\Aff(H) =  H \rtimes \Aut(H)$ is the group of affine transformations on $H$. Indeed, K.~Dekimpe showed in \cite{deki98-1} that for any $1$-connected solvable Lie group $G$, there exist a $1$-connected nilpotent Lie group $H$ and a NIL-affine action $\rho: G \to \Aff(H)$ letting $G$ act simply transitively on $H$. 

This paper studies the related question of determining the pairs $(G,H)$ with $G$ and $H$ $1$-connected Lie groups with $G$ solvable and $H$ nilpotent such that $G$ acts simply transitive on $H$. In the case of $H= \RR^n$, so the usual affine case, one can translate the problem to the Lie algebra level where, as we mentioned before, the existence of a simply transitive affine action of $G$ on $\RR^n$ is equivalent to the existence of a complete pre-Lie algebra structure on the solvable Lie algebra $\ggo$ corresponding to $G$, see \cite{kim86-1}. 

However, in the more general NIL-affine case there is not yet such a correspondence. The particular case when both $G$ and $H$ are nilpotent was studied in \cite{bdd07-1} and the main result gives a complete description on the level of Lie algebras. Note that the action of $G$ on $H$ is completely determined by the corresponding map on the Lie algebras, namely
\begin{align*}
\varphi : \ggo &\to \aff(\hgo) =  \hgo \rtimes \Der(\hgo)  \\
X &\mapsto \left(t(X),D(X) \right)
\end{align*}
with $t: \ggo \to \hgo$ a linear map and $D: \ggo \to \Der(\hgo)$ a morphism of Lie algebras. 
\begin{theorem}{\cite[Theorem 3.1.]{bdd07-1}}
	\label{thm:kd}
Let $G$ and $H$ be $1$-connected nilpotent Lie groups and $\rho: G \to \Aff(H)$ a representation with corresponding maps $t: \ggo \to \hgo$ and $D: \ggo \to \Der(\hgo)$. The group $G$ acts simply transitive on $H$ if and only if the map $t: \ggo \to \hgo$ is a bijection and $D(X)$ is nilpotent for every $X \in \ggo$.
\end{theorem}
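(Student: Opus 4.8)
The plan is to analyze the orbit map $\Phi\colon G \to H$, $g \mapsto \rho(g)\cdot e_H$, where $e_H$ denotes the identity of $H$; the action is simply transitive exactly when $\Phi$ is a diffeomorphism. Since $(a,\psi)\cdot e_H = a\cdot\psi(e_H) = a$, evaluating at the basepoint records only the translation part, so differentiating at the identity $e$ gives $d\Phi_e = t$ under the identifications $T_eG = \ggo$ and $T_{e_H}H = \hgo$. More generally, the relation $\Phi\circ L_g = \rho(g)\circ\Phi$ (left translation in $G$ versus the diffeomorphism $\rho(g)$ of $H$) yields $d\Phi_g = (d\rho(g))_{e_H}\circ t \circ (dL_{g^{-1}})_g$, a composition whose outer two factors are always linear isomorphisms. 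Hence $\Phi$ is a local diffeomorphism at every point if and only if $t$ is bijective. This gives one half of the first condition in the forward direction and, conversely, once $t$ is assumed bijective (so $\dim\ggo = \dim\hgo$), reduces the theorem to deciding when the local diffeomorphism $\Phi$ is global.

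Because $H$ is $1$-connected nilpotent it is diffeomorphic to $\RR^n$, hence simply connected, so a proper local diffeomorphism onto $H$ is a covering map and therefore a diffeomorphism. Thus, assuming $t$ bijective, the whole statement reduces to the claim that $\Phi$ is proper if and only if every $D(X)$ is nilpotent. For the implication that nilpotency forces properness I would invoke Engel's theorem: the image $D(\ggo)$ is a Lie algebra of nilpotent endomorphisms of $\hgo$, hence admits a common basis in which all $D(X)$ are strictly upper triangular, which can be chosen compatibly with the lower central series of $\hgo$. Passing to the exponential coordinates $\exp_H\colon\hgo\to H$ and $\exp_G\colon\ggo\to G$, the affine action becomes an action by unipotent polynomial maps, and this triangular structure lets one solve for a continuous inverse of $\Phi$ one coordinate at a time, or equivalently show $\|\Phi(g)\|\to\infty$ as $g\to\infty$; either way $\Phi$ is proper.

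For the converse I would argue by contraposition. If some $D(X_0)$ is not nilpotent it has a nonzero eigenvalue over $\CC$. Writing $\rho(\exp_G(sX_0)) = (a(s),\exp(sD(X_0)))$, the one-parameter subgroup law forces $a(s) = \Phi(\exp_G(sX_0))$ to satisfy an ODE driven by $\exp(sD(X_0))\,t(X_0)$: a nonzero real part produces an orbit that stays bounded as $s\to-\infty$ (or $s\to+\infty$), while a nonzero imaginary part produces a bounded oscillating direction. In either case the orbit fails to leave every compact set even though $\exp_G(sX_0)\to\infty$ in $G$, contradicting properness of $\Phi$. Hence properness forces every $D(X)$ to be nilpotent, and combined with the reductions above this establishes both directions.

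The main obstacle is exactly this equivalence between properness of $\Phi$ and nilpotency of the derivations $D(X)$. The forward (nilpotency $\Rightarrow$ proper) direction rests on choosing the Engel basis compatibly with the group structure so that the growth estimate is genuinely triangular, and the reverse direction requires extracting a bounded orbit direction from a merely non-nilpotent operator; the delicate case is a purely imaginary eigenvalue sitting inside a nontrivial Jordan block, which is cleanest to treat via the Jordan decomposition in the real algebraic group $\Aff(H) = H\rtimes\Aut(H)$.
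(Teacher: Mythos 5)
First, a remark on the comparison itself: the paper does not prove this statement but imports it from \cite{bdd07-1}, so there is no internal proof to measure you against; I can only judge your argument on its own terms. Your handling of the condition on $t$ is sound: $\Phi(g)=\rho(g)\cdot e_H$ has $d\Phi_e=t$, the equivariance $\Phi\circ L_g=\rho(g)\circ\Phi$ shows $\Phi$ is everywhere a local diffeomorphism iff $t$ is bijective, and a proper local diffeomorphism onto the simply connected $H$ is a diffeomorphism. The direction ``all $D(X)$ nilpotent $\Rightarrow$ simply transitive'' is also essentially right, though glossed: the cleanest packaging is that $\varphi(\ggo)\subset\aff(\hgo)\subset\glg(\hgo\oplus\RR)$ then consists of nilpotent matrices, so $\rho(G)$ is a unipotent algebraic group acting algebraically on the affine variety $H$; its orbit through $e_H$ is closed (Kostant--Rosenlicht) and open (as $t$ is bijective), hence all of $H$, and the stabilizer is a zero-dimensional, hence trivial, unipotent algebraic subgroup. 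Your coordinate-by-coordinate inversion can be made to work, but only after choosing the Engel flag compatibly with the lower central series of $\hgo$ and the Baker--Campbell--Hausdorff multiplication, which is more delicate than you indicate.

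The genuine gap is in the converse, ``simply transitive $\Rightarrow$ every $D(X)$ nilpotent.'' Your contrapositive looks at the single curve $a(s)=\Phi(\exp_G(sX_0))$, which solves $a'(s)=(dL_{a(s)})_{e_H}\bigl(e^{sD(X_0)}t(X_0)\bigr)$, and claims that a nonzero eigenvalue of $D(X_0)$ forces $a$ to stay bounded in some direction. That is false as stated: if $t(X_0)$ has no component in the nonzero generalized eigenspaces of $D(X_0)$ --- for instance $D(X_0)=\operatorname{diag}(1,0)$ on $\hgo=\RR^2$ with $t(X_0)=(0,1)$, giving $a(s)=(0,s)$ --- the curve is proper and detects nothing. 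Excluding this requires the morphism condition tying $t$ to $D$ on all of $\ggo$, not just along one one-parameter subgroup, and the case of purely imaginary spectrum inside a nontrivial Jordan block (which you flag yourself) is exactly where the naive growth estimate collapses. The argument that actually works is global and algebraic: the stabilizer of a point in the Zariski closure $\overline{\rho(G)}$ is a maximal torus, freeness of the action forces $\rho(G)$ to meet it trivially, and a dimension count --- precisely the nilpotent analogue of Theorem~\ref{TcapG} and Corollary~\ref{cor:noss} of this paper --- then shows every element of $\rho(G)$ is unipotent, i.e.\ every $D(X)$ is nilpotent. Without an argument of this type the converse direction is not established.
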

\noindent This result was used to show that for every $n \leq 5$ and all $1$-connected nilpotent Lie groups $G$ and $H$ of dimension $n$, there exists a simply transitive NIL-affine action $\rho: G \to \Aff(H)$. 

The purpose of this paper is to fill the gap when the Lie group $G$ is solvable and non-nilpotent. Specifically, we have the following two questions for which we will provide characterizations in terms of the corresponding Lie algebras. The first one asks how we can determine, given the induced map $\varphi$ on the level of Lie algebras, if the corresponding action is simply transitive.

\begin{question}
	\label{question1}
Given a Lie algebra morphism $\varphi: \ggo \to \aff(\hgo)$, determine whether the corresponding NIL-affine action of Lie groups is simply transitive.
\end{question}
\noindent The answer for this question is given in Theorem \ref{thm:q1}, although Example \ref{ex:notDtonly} shows that we cannot express it purely in terms of the maps $t$ and $D$ only. The second question is to investigate on the Lie algebra level whether, given the Lie groups $G$ and $H$, there exists a simply transitive NIL-affine action $\rho: G \to \Aff(H)$.  

\begin{question}
	\label{question2}
Given a solvable Lie algebra $\ggo$ and a nilpotent Lie algebra $\hgo$, determine whether there exists a simply transitive action of the corresponding $1$-connected Lie groups.
\end{question}
\noindent For this, we will describe a criterion in Theorem \ref{cor:q2} depending only on the semisimple splitting of the Lie algebra $\ggo$, which is introduced in Section \ref{sec:back}. It reduces the study of the solvable case to the nilpotent case and additionally embedding the semisimple part in a compatible way. As an application we study the possibilities for dimension $\leq 4$ in Sections \ref{sec:dim3} and \ref{sec:dim4}, based on some computational properties developed in Section \ref{sec:comp}.

\medskip

\noindent \textbf{Acknowledgements.} 
Most of the work has been written during the post-doctoral position at KU Leuven, Campus Kulak Kortrijk of the second named author. He is grateful to the department for the hospitality.

\section{Background}
\label{sec:back}

For proving our main results, we need the theory of linear algebraic groups and the corresponding Lie algebras, which are called algebraic Lie algebras. Since algebraic closures are not always easy to compute, we also introduce the semisimple splitting of solvable Lie algebras, for which we will recall a construction in Section \ref{sec:comp}. To every solvable Lie algebra $\ggo$, we will hence associate two different solvable Lie algebras, namely the semisimple splitting $\ggo^\prime$ and the algebraic hull $\hgo = \overline{\ggo}$, which are related via Theorem \ref{relationssandhull} at the end of this section.

\subsection{Linear algebraic groups}

We start by recalling the structure of linear algebraic groups, as introduced in \cite{hump81-1}. Let $K$ be any subfield of $\CC$, then we define a linear algebraic $K$-group $G$ as a subgroup of $\Gl(n,\CC)$ which is $K$-closed, i.e.~$G$ is the zero set of a finite number of polynomials with coefficients in $K$. A group morphism between two linear algebraic $K$-groups is said to be defined over $K$ (or is an algebraic morphism) if the coordinate functions are given by polynomials over the field $K$. A torus is a linear algebraic $K$-group which is isomorphic to a closed subgroup of diagonal matrices $D(n,\CC)$. 

For our purposes, we will always work with the field of real numbers, so $K = \RR$. We denote by $G(\RR) = G \cap \Gl(n,\RR)$ the subgroup of real points in $G$ and we call $G(\RR)$ a real algebraic group. A real algebraic group automatically has the structure of a Lie group, see \cite[Section 35.3]{hump81-1}, although a Zariski-connected real algebraic group is not necessarily connected as a Lie group, e.g.~the real algebraic group $\Gl(n,\RR)$ is Zariski-connected but as a Lie group it has two connected components. The number of connected components of a Zariski-connected real algebraic group as a Lie group is always finite.

Let $G$ be a connected solvable Lie group which is given as a subgroup $G < \Gl(n, \RR)$.  The real algebraic closure $\overline{G}$ of $G$ in $\Gl(n,\RR)$ is a linear algebraic group which is again solvable and Zariski-connected. Hence $\overline{G}$ splits as a semidirect product $\overline{G} =  U(\overline{G}) \rtimes T$ where $U(\overline{G})$ is the subgroup of $\overline{G}$ consisting of all unipotent elements in $\overline{G}$, called the unipotent radical, and $T$ is a maximal real torus of $\overline{G}$. We will denote $U_G = U(\overline{G})$ since it only depends on $G$. The maximal real torus $T$ is not unique but only unique up to conjugation in $\overline{G}$, see \cite{bore91-1}.

If $H$ is a $1$-connected nilpotent Lie group, then $H$ has a unique structure as a unipotent real algebraic group. Since $\Aut(H)$ is isomorphic to the automorphism group of the corresponding Lie algebra, it carries the structure of a real algebraic group as well. Hence also $\Aff(H) = H \rtimes \Aut(H)$ can be considered as a real algebraic group and therefore we can also consider the real algebraic closure of solvable subgroups of $\Aff(H)$, as we will often do in the remaining part of the paper.

\subsection{Algebraic Lie algebras} 

As was the case for linear algebraic groups, we are mainly interested in Lie algebras over the real numbers $\mathbb{R}$. We call a Lie algebra $\ggo \subset \glg(n,\RR)$ algebraic if there is a real linear algebraic group $G < \Gl(n,\RR)$ such that $\ggo$ is the Lie algebra corresponding to $G$. If $\hgo \subset \glg(n,\RR)$ is any subalgebra, then we call the smallest algebraic Lie algebra $\ggo$ containing $\hgo$ the algebraic closure of $\hgo$ and denote this by $\ggo = \overline{\hgo}$. We say that a Lie algebra morphism between algebraic Lie algebras is algebraic if it corresponds to an algebraic morphism of real algebraic groups. Note that our terminology deviates from the one in \cite{degr17-1}, where the algebraic closure is called an algebraic hull. In this paper, the algebraic hull has another distinct meaning, corresponding to the notion of algebraic hulls for solvable Lie groups as introduced in \cite{ragh72-1}, see Definition \ref{def:hull} below.

Every element $X \in \glg(n,\RR)$ can be uniquely written as $X = X_n + X_s$ where $X_n \in \glg(n,\RR)$ is nilpotent, $X_s \in \glg(n,\RR)$ is semisimple and such that $X_n$ and $X_s$ commute. We call this the (additive) Jordan decomposition of $X$ and $X_n, X_s$ are called the nilpotent and semisimple part of X, respectively. Note that if $\ggo$ is an algebraic Lie algebra, then $\ggo$ contains the nilpotent and semisimple part of all its elements. Moreover, for every solvable algebraic Lie algebra $\ggo$, the set of nilpotent elements of $\ggo$ forms a subalgebra. It is exactly the Lie algebra corresponding to the unipotent radical $U(G)$, where $G$ is the real algebraic group corresponding to $\ggo$, and hence we denote this subalgebra as $\ug(\ggo)$.

Let $\ggo \subset \glg(n,\RR)$ be any solvable Lie algebra and denote by $\overline{\ggo}$ the algebraic closure of $\ggo$ in $\glg(n,\RR)$. The Lie algebra $\overline{\ggo}$ can be decomposed as a semi-direct product $\overline{\ggo} = \ug(\overline{\ggo}) \rtimes \tg $ where $\ug(\overline{\ggo})$ is the subalgebra containing all nilpotent elements of $\overline{\ggo}$ and $\tg$ is an abelian subalgebra consisting of semisimple elements, see \cite[Theorem 4.3.20.]{degr17-1}. Since $\ug(\overline{\ggo})$ depends only on the Lie algebra $\ggo$, we will often denote it by $\ug_\ggo$. Recall that the Lie bracket in the semidirect product $\ug_\ggo \rtimes \tg$ is given by $$ [(u_X,t_X),(u_Y,t_Y)] = \left([u_X,u_Y] + t_X(u_Y) - t_Y(u_X), 0 \right).$$ If the Lie algebra $\overline{\ggo}$ is abelian (or even nilpotent), the natural maps $X \mapsto X_n$ and $X \mapsto X_s$ are algebraic morphisms, see \cite[Theorem 4.7]{bore91-1}. At this point, we want to emphasize that being semisimple or nilpotent is not a property of the Lie algebra itself, but depends on the embedding into $\glg(n,\RR)$. The abelian Lie algebra of dimension $1$ can both be embedded as semisimple or as nilpotent elements, corresponding to the multiplicative and the additive linear algebraic group of dimension $1$.

Since for every $1$-connected nilpotent Lie group $H$ the affine group $\Aff(H)$ is a real algebraic group, the corresponding Lie algebra $\aff(\hgo) = \hgo \rtimes \Der(\hgo)$ is hence an algebraic Lie algebra. We will also consider the algebraic closure of subalgebras in $\aff(\hgo)$ lateron.

\subsection{Algebraic hull}

One of the main tools for studying simply transitive actions of solvable Lie groups $G$ is the notion of the algebraic hull as introduced in \cite{ragh72-1}. Since we focus on Lie algebras here, we will rephrase the definition in terms of Lie algebras.

\begin{definition}
	\label{def:hull}
	Let $\ggo$ be a solvable Lie algebra. We call an algebraic Lie algebra $\hgo$ the \emph{algebraic hull} of $\ggo$ if it satisfies the following conditions:
	\begin{enumerate}
		\item There is an injective Lie algebra morphism $i: \ggo \to \hgo$ such that $\hgo$ is equal to the algebraic closure of $i(\ggo)$;
		\item $\dim(\ug(\hgo)) = \dim(\ggo)$;
		\item The centralizer of $\ug(\hgo)$ is contained in $\ug(\hgo)$.
	\end{enumerate}
\end{definition}
\noindent When talking about the algebraic hull of a Lie algebra $\ggo$, we will slightly abuse notations and consider $\ggo$ as a subalgebra of its own algebraic hull, hence taking $i$ as the inclusion map. In particular, since every algebraic Lie algebra is implicitly embedded in $\glg(n,\RR)$, we will identify $\ggo$ to a subalgebra of $\glg(n,\RR)$ in this case.

Note that the algebraic hull of a Lie algebra $\ggo$ is exactly the Lie algebra corresponding to the algebraic hull of the solvable $1$-connected Lie group $G$ as introduced in \cite{ragh72-1}. The existence of the algebraic hull follows from \cite{ragh72-1}, see also the discussion in \cite[Section 7]{dere19-1} for more details. The inequality $\dim(\ug_{\ggo}) \leq \dim(\ggo)$ always holds, see also Theorem \ref{TcapG} in the next section, so condition $(2)$ means that the nilpotent elements $\ug_\ggo$ of $\overline{\ggo}$ have maximal dimension. By \cite[Lemma 4.41.]{ragh72-1}, it follows that every Lie algebra morphism $\varphi: \ggo \to \hgo$ uniquely extends to an algebraic morphism $\overline{\varphi}: \overline{\ggo} \to \overline{\hgo}$ between the algebraic hulls, implying that the algebraic hull is unique up to algebraic isomorphism. The ideal $\ug_\ggo$ of nilpotent elements is equal to the nilradical of $\hgo$ in this case.

\subsection{Semisimple splitting and nilshadow}

Although \cite{degr17-1} gives an algorithm to compute the algebraic closure of a given algebra $\ggo$, it in general takes some time to find the full description. Hence for the applications we will focus on the semisimple splitting which is enough to decide whether there exists a simply transitive action.

\begin{definition}
	Let $\ggo$ be a solvable Lie algebra. We call a solvable Lie algebra $\ggo^\prime$ a \emph{semisimple splitting} of $\ggo$ if and only if it satisfies the following conditions.
		\begin{enumerate}
		\item $\ggo^\prime =  \ngo \rtimes \tg$ with $\ngo$ the nilradical of $\ggo^\prime$ and $\tg$ an abelian subalgebra;
		\item $\tg$ acts on $\ngo$ via semisimple derivations;
		\item $\ggo$ is an ideal of $\ggo^\prime$ such that $\ggo \cap \tg = 0$ and $\ggo^\prime = \ggo + \tg = \ggo + \ngo$;
		\item $\ngo = \ngo \cap \ggo + \cg$ where $\cg$ is the centralizer of $\tg$ in $\ngo$.
	\end{enumerate}
If $G$ and $G^\prime$ are the unique $1$-connected solvable Lie groups associated to $\ggo$ and $\ggo^\prime$, then we call $G^\prime$ the \emph{semisimple splitting} of $G$.
\end{definition}
\noindent Condition (3) implies that $\ggo^\prime$ is equal to the semidirect product $\ggo \rtimes \tg$. The nilradical of the semisimple splitting of $\ggo$ is called the \textit{nilshadow} of $\ggo$. 

There are many connections between the definitions of the semisimple splitting and the algebraic hull. The following result gives an exact relation between them.

\begin{theorem}
	\label{relationssandhull}
Let $\ggo$ be a solvable Lie algebra with algebraic hull $\hgo$, then the semisimple splitting is equal to the subalgebra $\ggo^\prime = \ggo + \ug(\hgo)$ of $\hgo$. 
\end{theorem}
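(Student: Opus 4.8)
The plan is to verify directly that the subalgebra $\ggo' := \ggo + \ug(\hgo)$ of the algebraic hull $\hgo$ satisfies the four defining conditions of a semisimple splitting, using the three properties of $\hgo$ from Definition \ref{def:hull}. Throughout I write $\hgo = \ug(\hgo) \rtimes \tg$ for the decomposition recalled above, with $\tg$ abelian acting on the nilradical $\ug(\hgo)$ by semisimple derivations, and I let $p \colon \hgo \to \tg$ be the projection with kernel $\ug(\hgo)$. First note that $\ggo'$ is a subalgebra, being the sum of the subalgebra $\ggo$ and the ideal $\ug(\hgo)$.

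The candidate complement is $\tg' := p(\ggo) \subseteq \tg$. The key point is $\tg' \subseteq \ggo'$: writing $X \in \ggo$ as $X = u_X + p(X)$ with $u_X \in \ug(\hgo)$ gives $p(X) = X - u_X \in \ggo + \ug(\hgo) = \ggo'$. Since $\tg'$ is a subspace of the abelian $\tg$, it is an abelian subalgebra acting on $\ug(\hgo)$ by semisimple derivations, and $\tg' \cap \ug(\hgo) \subseteq \tg \cap \ug(\hgo) = 0$. A dimension count, using $\dim \ggo' = \dim \ggo + \dim \ug(\hgo) - \dim(\ggo \cap \ug(\hgo))$ together with $\dim \tg' = \dim \ggo - \dim(\ggo \cap \ug(\hgo))$, then shows $\ggo' = \ug(\hgo) \oplus \tg'$, i.e.\ $\ggo' = \ug(\hgo) \rtimes \tg'$. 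This secures the decomposition in condition $(1)$ and all of condition $(2)$, provided $\ug(\hgo)$ is the nilradical of $\ggo'$.

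To identify the nilradical I would first observe that $\tg'$ acts faithfully on $\ug(\hgo)$: if $t \in \tg'$ centralizes $\ug(\hgo)$, then condition $(3)$ of the algebraic hull forces $t \in \ug(\hgo)$, whence $t \in \tg \cap \ug(\hgo) = 0$. Now let $X = u + t$ lie in the nilradical $\ngo$ of $\ggo'$, with $u \in \ug(\hgo)$ and $t \in \tg'$. Then $\ad_X$ is nilpotent on $\ggo'$, hence on $\ug(\hgo)$; passing to the associated graded of the lower central series of $\ug(\hgo)$, the inner operator $\ad_u$ induces zero, so the operator induced by $\ad_t$ is nilpotent. As $\ad_t$ is semisimple it must then vanish, giving $t = 0$ by faithfulness. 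Thus $\ngo = \ug(\hgo)$, completing conditions $(1)$ and $(2)$.

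The remaining conditions $(3)$ and $(4)$ are where the real difficulty lies. Decomposing $\ug(\hgo) = \bigoplus_\alpha W_\alpha$ into $\tg'$-weight spaces, with $W_0 = \cg$ the centralizer of $\tg'$, both conditions reduce to two facts about the \emph{position} of $\ggo$ inside $\hgo$: the transversality $\ggo \cap \tg' = 0$, and the covering property that every nonzero weight space $W_\alpha$ lies in $\ggo$. Indeed, transversality upgrades the dimension count to $\ggo' = \ggo + \tg'$; then for $Y = u_Y + t_Y \in \ggo$ and $t \in \tg'$ one computes $[t,Y] = \ad_t(u_Y) = \sum_{\alpha \neq 0} \alpha(t)\,(u_Y)_\alpha$, which lies in $\ggo$ by the covering property, so $[\ggo',\ggo] = [\ggo,\ggo] + [\tg',\ggo] \subseteq \ggo$ and $\ggo$ is an ideal, giving $(3)$; and the covering property is exactly the equality $\ug(\hgo) = (\ggo \cap \ug(\hgo)) + W_0$, which is condition $(4)$. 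I expect the covering and transversality statements to be the main obstacle: they cannot follow from dimensions alone, but encode how $\ggo$ sits in its algebraic closure, and I would derive them from the minimality of $\hgo = \overline{\ggo}$ together with the maximality in condition $(2)$ and the centralizer condition $(3)$. Equivalently, and this is the cleaner conceptual route, one shows that $\ug(\hgo)$ is precisely the nilshadow $\ngo$: realizing the semisimple splitting $\ngo \rtimes \tg'$ with $\ngo$ unipotent and $\tg'$ semisimple, its algebraic closure $\ngo \rtimes \overline{\tg'}$ satisfies the three axioms of the algebraic hull of $\ggo$ (the closure equality being the delicate point, where condition $(4)$ of the splitting guarantees that $\overline{\ggo}$ already recovers all of $\ngo$), so by uniqueness of the hull one gets $\ug(\hgo) = \ngo$ and hence $\ggo + \ug(\hgo) = \ggo + \ngo = \ggo'$. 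I would invoke \cite{ragh72-1} and the explicit construction in Section \ref{sec:comp} to justify this last identification.
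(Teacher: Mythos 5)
Your reduction of the theorem to two facts about how $\ggo$ sits inside its hull --- the transversality $\ggo \cap \tg = 0$ (hence $\ggo\cap\tg'=0$) and the ``covering property'' that every nonzero $\tg'$-weight space $W_\alpha$ of $\ug(\hgo)$ is contained in $\ggo$ --- is correct and well organized, and your verification of conditions (1) and (2) (the vector-space splitting $\ggo' = \ug(\hgo)\rtimes\tg'$ with $\tg'=p(\ggo)$, and the identification of $\ug(\hgo)$ as the nilradical via the associated-graded argument) is sound. But the argument stops exactly where the content of the theorem lies: both key facts are announced as something you ``would derive'' from minimality of the closure, maximality in condition (2) and the centralizer condition, without an actual derivation, and your alternative route via uniqueness of the hull likewise flags its own ``delicate point'' (that $\overline{\ggo}$ recovers all of $\ngo$) without resolving it. As it stands this is a genuine gap, since neither fact follows from dimension counting alone.

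Both gaps can be closed with tools already in the paper. Transversality is immediate from Theorem \ref{TcapG}: condition (2) of Definition \ref{def:hull} gives $\dim\ug(\hgo)=\dim\ggo$, so that theorem forces $\dim(\tg\cap\ggo)=0$. For the covering property, use that the commutator subalgebra of the algebraic closure equals that of $\ggo$ (\cite[Lemma 4.3.17]{degr17-1}, already invoked in the proof of Theorem \ref{TcapG}): for a nonzero weight $\alpha$ choose $t\in\tg'$ with $\alpha(t)\neq 0$; then $\ad_t$ is invertible on $W_\alpha$ (after complexifying, or on $W_\alpha+W_{\overline{\alpha}}$ over $\RR$), so $W_\alpha=[t,W_\alpha]\subseteq[\hgo,\hgo]=[\ggo,\ggo]\subseteq\ggo$. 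With these two insertions your direct verification of all four axioms goes through. Note finally that your route is genuinely different from the paper's, which does not verify the axioms at all but deduces the statement from \cite[Proposition 2.4.]{kasu13-1} together with the uniqueness up to isomorphism of both the algebraic hull and the semisimple splitting; your second, ``conceptual'' route is essentially that argument, while your first, once completed as above, is more self-contained.
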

Although formulated slightly different, this follows from \cite[Proposition 2.4.]{kasu13-1}. This proposition only gives this result for a concrete example, but since both the algebraic hull as the semisimple splitting are unique up to  isomorphisms, the statement holds in general. In particular, the Lie algebra $\ug(\hgo)$ corresponding to the unipotent radical of the algebraic hull of $\ggo$ is isomorphic to the nilradical of the semisimple splitting $\ggo^\prime$, so sometimes we will also refer to the former as the nilshadow. We will recall the construction of the semisimple splitting in Section \ref{sec:comp} which is dedicated to all computations.

\section{Main results}

In this section, we formulate our main results, which answer both Question \ref{question1} and \ref{question2} in a general way. The practicalities and applications then follow in later sections.

Let us start by fixing some notation for the whole section. Note that if the map $\rho: G \to \Aff(H)$ is not injective, the action cannot be simple. So by identifying $G$ with its image $\rho(G)$, we can assume that $G < \Aff(H)$ is a subgroup, or $\ggo \subset \aff(\hgo)$ is a subalgebra. We will assume that $G$ is a connected solvable Lie group with algebraic closure $\overline{G}$ and unipotent radical $U_G = U(\overline{G})$. Since $G$ is connected, the group $\overline{G}$ is Zariski-connected, but it is not necessarily connected as a Lie group, although it only has a finite number of connected components. 

It is well-known that the dimension of $U_G$ is bounded above by the dimension of $G$, see for example \cite[Lemma 4.36.]{ragh72-1}. The following theorem gives a new proof of this fact, including a precise formula for the dimension of $U_G$ depending on the intersection $G \cap T$ with $T$ a maximal torus of $\overline{G}$. Since the dimension of a Lie group and the corresponding Lie algebra are the same, we have an equivalent formulation in terms of Lie algebras $\ggo$ with $\ug_\ggo$ the subalgebra consisting of all nilpotent elements of $\overline{\ggo}$ and the Lie algebra $\tg$ corresponding to a maximal torus.

\begin{theorem}
	\label{TcapG}
	Using the notations of above, we have that $$\dim\left(U_G\right) + \dim(T \cap G) = \dim(G)$$ or equivalently $$\dim\left(\ug_\ggo\right) + \dim(\tg \cap \ggo) = \dim \ggo.$$
\end{theorem}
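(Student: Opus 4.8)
The plan is to establish the dimension formula by working with the algebraic closure $\overline{\ggo} = \ug_\ggo \rtimes \tg$ and analyzing how the original Lie algebra $\ggo$ sits inside it. The key structural fact I would exploit is the semidirect product decomposition $\overline{\ggo} = \ug_\ggo \rtimes \tg$ coming from the theory of solvable algebraic Lie algebras (cited from \cite[Theorem 4.3.20]{degr17-1}), together with the fact that $\overline{\ggo}$ is the algebraic closure of $\ggo$, so $\ggo$ is dense in $\overline{\ggo}$ in the Zariski sense and in particular $\dim \ggo$ relates to the dimension of $\overline{\ggo}$ in a controlled way. The heart of the argument should be to understand the projection of $\ggo$ onto the torus part $\tg$.

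\medskip

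\noindent\textbf{First} I would consider the projection map $\pi: \overline{\ggo} = \ug_\ggo \rtimes \tg \to \tg$ and restrict it to $\ggo$. Since $\overline{\ggo}$ is the algebraic closure of $\ggo$ and $\tg$ consists of semisimple elements, I expect that $\pi(\ggo) = \tg$: indeed, the semisimple part $X_s$ of any element $X \in \ggo$ lies in $\overline{\ggo}$, and the projection to $\tg$ essentially records these semisimple parts, whose span must generate all of $\tg$ because otherwise a smaller algebraic group would contain $\ggo$. This surjectivity gives, by the rank--nullity theorem, $\dim \ggo = \dim \ker(\restr{\pi}{\ggo}) + \dim \tg$. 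The kernel $\ker(\restr{\pi}{\ggo})$ consists exactly of the elements of $\ggo$ whose $\tg$-component vanishes, i.e.~$\ggo \cap \ug_\ggo$, which are the nilpotent elements of $\ggo$.

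\medskip

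\noindent\textbf{Next} I would relate $\dim \ug_\ggo$ to $\dim(\ggo \cap \ug_\ggo)$ and $\dim(\tg \cap \ggo)$. Here the semisimple splitting $\ggo^\prime = \ggo + \ug(\hgo)$ from Theorem \ref{relationssandhull} is the natural bridge: since $\ug_\ggo$ is the nilradical of the semisimple splitting and $\ggo$ is an ideal of $\ggo^\prime$ with $\ggo^\prime = \ggo + \ug_\ggo$, counting dimensions gives $\dim \ug_\ggo = \dim(\ggo^\prime) - \dim(\tg')$ for the appropriate torus, while the condition $\ggo \cap \tg = 0$ controls the overlap. Combining the rank--nullity identity $\dim \ggo = \dim(\ggo \cap \ug_\ggo) + \dim \tg$ with the complementary count $\dim \ug_\ggo = \dim(\ggo \cap \ug_\ggo) + \dim(\tg \cap \ggo)$ — which expresses that the nilpotent elements of $\overline{\ggo}$ not already in $\ggo$ are accounted for precisely by the torus directions that $\ggo$ does meet — should yield the desired formula $\dim \ug_\ggo + \dim(\tg \cap \ggo) = \dim \ggo$ after eliminating the common term $\dim(\ggo \cap \ug_\ggo)$.

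\medskip

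\noindent\textbf{The hard part} will be justifying rigorously that $\pi(\ggo) = \tg$ and, more delicately, the second dimension count $\dim \ug_\ggo = \dim(\ggo \cap \ug_\ggo) + \dim(\tg \cap \ggo)$, which is where the genuine content lies. The surjectivity onto $\tg$ requires that the semisimple parts of elements of $\ggo$ span $\tg$, which follows from minimality of the algebraic closure but needs care since the Jordan decomposition maps need not be algebraic unless $\overline{\ggo}$ is nilpotent (as noted in the excerpt after \cite[Theorem 4.7]{bore91-1}). I would handle this by choosing a basis of $\ggo$ adapted to the filtration by nilpotency and tracking how each basis element decomposes, showing that every torus element arising as a semisimple part $X_s$ differs from $X$ by a nilpotent element of $\ug_\ggo$; the elements for which $X_s \in \ggo$ itself contribute to $\tg \cap \ggo$, while those for which only $X_s \in \overline{\ggo} \setminus \ggo$ force a corresponding nilpotent shadow in $\ug_\ggo$ outside $\ggo$, giving exactly the bookkeeping needed for the second identity.
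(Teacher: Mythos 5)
Your first step already fails. The projection $\pi:\overline{\ggo}=\ug_\ggo\rtimes\tg\to\tg$ restricted to $\ggo$ is \emph{not} surjective in general: Zariski density of $\ggo$ in $\overline{\ggo}$ only gives that $\pi(\ggo)$ has algebraic closure equal to $\tg$, and proper subalgebras of the Lie algebra of a torus need not be algebraically closed (this is exactly the phenomenon behind Example \ref{ex:ssvshull}). Concretely, take $\ggo=\RR^2\rtimes_D\RR$ with $D=\mathrm{diag}(1,\alpha)$, $\alpha$ irrational: there $\dim\ggo=3$, $\dim\tg=2$, but $\ggo\cap\ug_\ggo$ is $2$-dimensional, so $\pi(\ggo)$ is only $1$-dimensional. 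Consequently your rank--nullity identity $\dim\ggo=\dim(\ggo\cap\ug_\ggo)+\dim\tg$ reads $3=2+2$ in this example and is false. Your second identity $\dim\ug_\ggo=\dim(\ggo\cap\ug_\ggo)+\dim(\tg\cap\ggo)$ is also false (take $\ggo=\tg$ a torus algebra: it reads $0=0+\dim\tg$), and even formally the two identities do not combine to give the theorem: eliminating $\dim(\ggo\cap\ug_\ggo)$ yields $\dim\ggo=\dim\ug_\ggo-\dim(\tg\cap\ggo)+\dim\tg$, which agrees with the desired formula only if $\dim\tg=2\dim(\tg\cap\ggo)$.

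The paper's proof avoids both problems by projecting in the \emph{opposite} direction and only after killing the commutator. Set $\mg=[\ggo,\ggo]=[\overline{\ggo},\overline{\ggo}]\subset\ug_\ggo$ and pass to the abelian quotient $\overline{\ggo}/\mg=(\ug_\ggo/\mg)\oplus\tg$. On an abelian (indeed nilpotent) algebraic Lie algebra the Jordan decomposition maps are algebraic, so the first-component projection $\varphi_1:\ggo\to\ug_\ggo/\mg$, $X\mapsto (X+\mg)_n$, extends to an algebraic morphism; and since every linear subspace of an abelian algebra of nilpotent elements \emph{is} algebraically closed, Zariski density now genuinely forces $\varphi_1(\ggo)=\ug_\ggo/\mg$. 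One then checks $\varphi_2(\ker\varphi_1)=\tg\cap\ggo$ with $\ker\varphi_2=\mg\subset\ggo$, and rank--nullity gives $\dim\ggo=\dim(\ug_\ggo/\mg)+\dim(\tg\cap\ggo)+\dim\mg=\dim\ug_\ggo+\dim(\tg\cap\ggo)$. The essential idea you are missing is that surjectivity can only be extracted on the unipotent side (where subspaces are closed), not on the torus side.
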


\begin{proof}We prove this theorem on the level of Lie algebras, which implies the statement for Lie groups. Write $\overline{\ggo} = \ngo \rtimes \tg$ with $\ngo = \ug_\ggo$ and consider the commutator subalgebra $\mg = [\ggo,\ggo] \subset \ngo$ which is also equal to the commutator subalgebra of $\overline{\ggo}$ by \cite[Lemma 4.3.17]{degr17-1}. Take the natural quotient map $$\pi: \overline{\ggo} \to \faktor{\overline{\ggo}}{\mg} =  \faktor{\ngo}{\mg} \rtimes \tg = \faktor{\ngo}{\mg} \oplus \tg$$ which is an algebraic morphism. From $\pi$ we define the linear maps $$\varphi_1: \ggo \to \faktor{\ngo}{\mg}$$ which is the restriction of the projection on the first component and $$\varphi_2: \ker(\varphi_1) \to \tg$$ which is the restriction of the projection on the second component. For every element $X \in \ggo$, we have that  $\varphi_1(X) = \left(X + \mg \right)_n$, so $\varphi_1$ is the restriction of an algebraic morphism $\overline{\ggo} \to \faktor{\ngo}{\mg}$ since $\faktor{\overline{\ggo}}{\mg}$ is an abelian Lie algebra.
	
Because $\overline{\ggo}$ is the algebraic closure of $\ggo$, we get that the algebraic closure of $\varphi_1(\ggo)$ is $\faktor{\ngo}{\mg}$. Since $\faktor{\ngo}{\mg}$ is abelian and consists only of nilpotent elements, every subspace is equal to its own algebraic closure, see \cite[Lemma 4.3.1.]{degr17-1} and \cite[Corollary 4.3.7.]{degr17-1}. Thus we get $\varphi_1(\ggo) = \faktor{\ngo}{\mg}$. Moreover $\ker(\varphi_2) = \mg$ holds and using $\mg \subset \ggo$, we show that $\varphi_2(\ker(\varphi_1)) = \tg \cap \ggo$. Indeed, the inclusion $\tg \cap \ggo \subseteq \varphi_2(\ker(\varphi_1))$ is immediate. For the second inclusion assume $n+t \in \ker (\varphi_1)$ with $n \in \ngo$ and $t \in \tg$, then by the assumption $n + \mg = \varphi_1(n + t) = \mg$, so $n \in \mg \subset \ggo$ and hence also $t = \varphi_2(n+t) \in \ggo \cap \tg$.

The dimension theorem for linear maps implies that 
	\begin{align*}\dim(\ggo) &= \dim(\faktor{\ngo}{\mg}) + \ker(\varphi_1)\\ &= \dim(\ngo) - \dim(\mg) + \dim(\ggo \cap \tg) + \dim(\mg)\\ &= \dim(\ngo) + \dim(\ggo \cap \tg), \end{align*}
	which is exactly the stament of the theorem.
\end{proof}

\noindent For proving our main results, we will mainly need the following consequence.
\begin{corollary}
	\label{cor:noss}
	Using the notations of above, if $G$ is connected and $\dim(G) = \dim(U_G)$, then $G$ is torsion-free and every semisimple element in $G$ is trivial.
\end{corollary}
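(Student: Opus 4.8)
The plan is to derive the corollary directly from Theorem \ref{TcapG}. The hypothesis $\dim(G) = \dim(U_G)$ means $\dim(\ggo) = \dim(\ug_\ggo)$, so the formula $\dim(\ug_\ggo) + \dim(\tg \cap \ggo) = \dim(\ggo)$ forces $\dim(\tg \cap \ggo) = 0$, i.e.\ $\ggo \cap \tg = 0$ on the Lie algebra level. The geometric content I want to extract is that $G$ meets the torus only trivially, which is what should yield both torsion-freeness and the absence of nontrivial semisimple elements.

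First I would address the semisimple elements. Any semisimple element $g \in G$ lies in $\overline{G}$, and by the conjugacy of maximal tori in the Zariski-connected solvable group $\overline{G}$, every semisimple element is contained in some maximal torus; after conjugating we may assume $g \in T$. Since $g \in G \cap T$, I want to conclude $g$ is trivial. The Lie algebra computation gives $\dim(T \cap G) = 0$, so $T \cap G$ is a $0$-dimensional, hence finite, subgroup. The issue is that finite is not the same as trivial, so I cannot immediately conclude $g = 1$ from dimension alone; I expect this to be the main obstacle. To resolve it I would use that $G$ is connected: the intersection $G \cap T$ is a closed subgroup of the connected group $G$, but being $0$-dimensional it is discrete, and I need an argument (e.g.\ via the exponential map or normality considerations) that pins down $G \cap T$ as exactly the identity, not just a finite group. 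One clean route: since $\tg \cap \ggo = 0$ and $G$ is $1$-connected (or at least connected with the exponential behaviour of solvable groups in play), the connected subgroup $G$ cannot contain any nontrivial element of the torus that is not already in its identity component's torus part, which is trivial.

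For torsion-freeness, I would argue that any torsion element of $G$ must be semisimple. Concretely, an element $g$ of finite order satisfies a polynomial $g^k = 1$, so its eigenvalues are roots of unity and in particular $g$ is diagonalizable over $\CC$, hence semisimple as an element of $\Gl(n,\RR) \subset \Gl(n,\CC)$. By the previous paragraph every semisimple element of $G$ is trivial, so the only torsion element is the identity, giving torsion-freeness. This reduces torsion-freeness entirely to the semisimple statement, so the real work is concentrated in showing $G \cap T = \{1\}$.

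The main obstacle, then, is upgrading the dimension count $\dim(T \cap G) = 0$ to the genuinely trivial intersection $T \cap G = \{1\}$ rather than merely a finite group. I would handle this by exploiting that $G$ is connected together with the structure of $\overline{G} = U_G \rtimes T$: under the hypothesis $\dim(G) = \dim(U_G)$, the connected group $G$ projects to a trivial-dimensional subset of the torus factor, and since the torus is $\RR$-split (a product of copies of the multiplicative group), its connected Lie subgroups and the image of the connected $G$ are controlled, forcing the projection of $G$ to $T$ to be trivial and hence $G \subseteq U_G$. From $G \subseteq U_G$ every element of $G$ is unipotent, so it has no nontrivial semisimple part and no torsion, which simultaneously yields both conclusions.
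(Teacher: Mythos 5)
Your reduction of torsion-freeness to the semisimple statement (a finite-order matrix satisfies $x^k-1=0$, hence is diagonalizable over $\CC$, hence semisimple) is fine, and you correctly locate the crux: Theorem \ref{TcapG} only gives $\dim(T\cap G)=0$, and one must upgrade a discrete intersection to a trivial one. But your proposed resolution of that crux does not work. The claim that the projection of $G$ to the torus factor is zero-dimensional, and hence that $G\subseteq U_G$, is false in general: the projection $p:\overline{G}=U_G\rtimes T\to T$ restricted to $G$ has image of dimension $\dim(G)-\dim(G\cap U_G)$, which is nonzero whenever $G$ is not nilpotent. Indeed $G\subseteq U_G$ would force $G=U_G^0$ and hence $\overline{G}=U_G$, i.e.\ $T$ trivial; Example \ref{ex:ssvshull} is an explicit case with $\dim(\ggo)=\dim(\ug_\ggo)$ but $\ggo\not\subseteq\ug_\ggo$. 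What vanishes is the \emph{intersection} $\tg\cap\ggo$, not the \emph{projection} of $\ggo$ onto $\tg$ — these are different subspaces, and your argument conflates them. The auxiliary claim that $T$ is $\RR$-split is also false in general (maximal real tori in $\Aut(H)$ can have compact factors acting by rotations, as in the $\mathfrak{e}(2)$ example), so "discrete subgroup of a split torus" cannot be the way out either.

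The paper closes the gap by passing to the abelianization: $\overline{G}/[G,G]=U_G/[G,G]\oplus T$, with $[G,G]\subseteq U_G$ torsion-free, and the projection $\varphi_1:G/[G,G]\to U_G/[G,G]$ is shown (via Theorem \ref{TcapG}) to be a surjective homomorphism of connected abelian Lie groups with discrete kernel onto a \emph{simply connected} vector group, hence an isomorphism. This is exactly the ingredient your argument is missing: simple connectedness of the unipotent quotient is what kills a possible finite kernel. Torsion-freeness then follows from the two-step filtration, and for a semisimple $g\in G$ one notes that its image in the abelian quotient is semisimple, so $\varphi_1(g)=0$, forcing $g\in[G,G]\subseteq U_G$; being both semisimple and unipotent, $g=e$. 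If you want to keep your structure (reduce everything to the semisimple statement), you would still need to run essentially this abelianization argument to show $G\cap T'=\{e\}$ rather than merely finite.
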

\begin{proof}
To show that $G$ is torsion-free, we will show that both $[G,G]$ and $\faktor{G}{[G,G]}$ are torsion-free. The first statement is immediate since $[G,G]$ is a subgroup of $U_G$, which is torsion-free. For the second, we write $\faktor{\overline{G}}{[G,G]} = \faktor{U_G}{[G,G]} \oplus T$ for any maximal torus $T$ of $\overline{G}$ exactly as in Theorem \ref{TcapG} and consider the restriction of the projection $\varphi_1: \faktor{G}{[G,G]} \to \faktor{U_G}{[G,G]}$ on the first component. Theorem \ref{TcapG} shows that this map is an isomorphism of groups and hence $\faktor{G}{[G,G]}$ is torsion-free. 

If $g \in G$ is semisimple, then also $g + [G,G]$ is semisimple in $\faktor{\overline{G}}{[G,G]}$. This would imply that $\varphi_1(g) = 0$, or thus that $g \in [G,G] \subset U_G$, leading to $g = e$.
\end{proof}
\noindent In particular, Corollary \ref{cor:noss} holds for the algebraic hull of a solvable Lie algebra. We will return to this remark in Example \ref{ex:ssvshull}, where we consider the algebraic hull of a Lie algebra $\RR^2 \rtimes_D \RR$ with a semisimple derivation $D$, which will hence not be semisimple as an element of the algebraic hull.

Note that under the assumptions of Corollary \ref{cor:noss} the group $G$ must be $1$-connected. It is a well-known fact that if $G$ acts simply transitive on $H$ via affine transformation, then $U_G$ also acts simply transitive on $H$, see for example \cite{bd00-1}. We prove a converse of this statement from Corollary \ref{cor:noss}.

\begin{theorem}
	\label{thm:converse}
	Let $G < \Aff(H)$ be a connected solvable Lie group with $\dim(G) = \dim(H)$ such that $U_G$ acts simply transitive on $H$, then also $G$ acts simply transitive on $H$.
\end{theorem}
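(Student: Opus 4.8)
The plan is to reduce the statement to the two facts just established, namely Corollary~\ref{cor:noss} and an identification of the point stabilizers. First I would record that since $U_G$ acts simply transitively on $H$, its orbit map $U_G \to H$, $u \mapsto u\cdot o$ (with $o$ the identity of $H$) is a diffeomorphism, so $\dim(U_G) = \dim(H)$. Combined with the hypothesis $\dim(G) = \dim(H)$ this yields $\dim(G) = \dim(U_G)$, which is exactly the assumption of Corollary~\ref{cor:noss}. Hence $G$ is torsion-free and, crucially, every semisimple element of $G$ is trivial. Moreover, since $U_G \subseteq \overline{G}$ already acts transitively, the larger group $\overline{G}$ acts transitively on $H$ as well, and the stabilizer of $o$ in $\Aff(H) = H \rtimes \Aut(H)$ is precisely $\Aut(H)$; thus $P := \mathrm{Stab}_{\overline{G}}(o) = \overline{G} \cap \Aut(H)$.

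The heart of the argument is to show that $G$ acts \emph{freely}. I would first prove that $P$ consists only of semisimple elements. Since $U_G$ acts freely, $U_G \cap P = \mathrm{Stab}_{U_G}(o) = \{e\}$. Given $p \in P$, take its multiplicative Jordan decomposition $p = p_s p_u$ inside the algebraic group $\overline{G}$; both factors lie in $\overline{G}$, and since they also fix $o$ they lie in $P$. As $\overline{G}$ is solvable, its unipotent elements are exactly those of the unipotent radical $U_G$, so $p_u \in U_G \cap P = \{e\}$, forcing $p = p_s$ to be semisimple. For an arbitrary point $h = u\cdot o \in H$ with $u \in U_G$, one has $\mathrm{Stab}_{\overline{G}}(h) = u P u^{-1}$, whose elements are again semisimple since conjugation preserves semisimplicity. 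Consequently $\mathrm{Stab}_G(h) = G \cap u P u^{-1}$ consists of semisimple elements of $G$, and by Corollary~\ref{cor:noss} it is trivial. Hence $G$ acts freely at every point of $H$.

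Finally I would deduce transitivity from freeness together with the dimension equality. Because every $G$-stabilizer is trivial, each $G$-orbit is an injectively immersed submanifold of dimension $\dim(G) = \dim(H)$, hence open in $H$. The orbits partition $H$ into open sets, so each orbit is also closed, and since $H$ is connected there is a single orbit, i.e.\ $G$ acts transitively. Together with freeness this gives that $G$ acts simply transitively on $H$.

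I expect the main obstacle to be the middle step: verifying rigorously that the point stabilizers in $\overline{G}$ are semisimple. This relies on the compatibility of the Jordan decomposition with the inclusion into the algebraic group $\overline{G}$ and on the characterization of $U_G$ as the set of all unipotent elements of the solvable group $\overline{G}$; one must also be careful that the Jordan factors of an element fixing $o$ again fix $o$, which follows from the fact that the stabilizer $\Aut(H)$ is itself an algebraic subgroup on which the Jordan decomposition is intrinsic.
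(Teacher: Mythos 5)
Your proof is correct, but it follows a genuinely different route from the paper's in both halves. For freeness, the paper identifies the stabilizer $T$ of a point in $\overline{G}$ as a full complement to $U_G$ (using transitivity of $U_G$ to get $U_G T = \overline{G}$ and $T \cap U_G = \{e\}$), concludes that $T$ is a maximal torus so its elements are semisimple, and then applies Corollary~\ref{cor:noss}; you instead work with $P = \overline{G} \cap \Aut(H)$ and use the intrinsic multiplicative Jordan decomposition in this algebraic subgroup together with the characterization of $U_G$ as the set of all unipotent elements of $\overline{G}$ to see that $P$ (and all its conjugates) is semisimple, before invoking the same corollary. The justification you flag as the delicate point is exactly right and suffices. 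For transitivity, the paper runs a Lie-algebra dimension count via Theorem~\ref{TcapG} to get $GT = \overline{G}$ and then cites an external fixed-point lemma for maximal tori; your open-orbit argument (trivial stabilizers plus $\dim G = \dim H$ make every orbit open, hence closed, hence equal to the connected space $H$) is more elementary and entirely self-contained. What you lose is the explicit identification of the stabilizer as a maximal torus of $\overline{G}$ with a fixed point, which the paper reuses verbatim in the proof of Theorem~\ref{thm:q2}; what you gain is independence from Theorem~\ref{TcapG} in the transitivity step and from the cited fixed-point lemma.
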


\begin{proof}
First we show that the action is simple. Take any $x \in H$ and $g \in G$ such that ${}^g x = x$. Write $T$ for the stabilizer of $x$ in $\overline{G}$, so with $g \in T$. Since the action of $U_G$ is simply transitive, $T \cap U_G = \{e\}$. We show that $ U_G T = \overline{G}$, or thus that $T$ is a maximal torus of $\overline{G}$. Take any $h \in \overline{G}$, then ${}^h x = {}^u x$ for some $u \in U_G$ because the action is transitive. Hence $h = u u^{-1} h $ with $u^{-1} h \in T$ by definition of the stabilizer $T$, so the claim follows. Now $g$, as element of $T$, is semisimple and by Corollary \ref{cor:noss} we get $g=e$. We conclude that the action is simple. 

Next, we show that the action is transitive. Take any maximal torus $T$ of $\overline{G}$, so with $\overline{G} = U_G T$, then we first show that $G T = \overline{G}$. It suffices to show that $U_G \subset G T$, or thus, since $U_G$ is connected, that $\ug_\ggo \subset \ggo + \tg$ with $\tg$ the Lie algebra of $T$. Note that $\tg \cap \ggo = 0$ by Theorem \ref{TcapG} and the fact that $\dim(\ggo) = \dim(\hgo) = \dim(\ug_\ggo)$. Since $\ggo + \tg \subset \overline{\ggo}$ and $\dim(\ggo + \tg ) = \dim(\ggo) + \dim(\tg) =  \dim(\ug_\ggo) + \dim(\tg) = \dim(\overline{\ggo})$, we conclude that $\ggo + \tg = \overline{\ggo}$ and thus $\ug_\ggo \subset \ggo + \tg$. Because $T$ is a maximal torus, it has a fixed point in $H$, see \cite[Lemma 5.3.]{dere19-1}. Since the action of $\overline{G}$ is transitive, this implies that the action of $G$ is transitve.
\end{proof}

We are now ready to give an answer to Question \ref{question1}. Recall that we assume $\varphi: \ggo \to \aff(\hgo)$ to be injective and identify $\ggo$ with its image.

\begin{theorem}
\label{thm:q1}
Let $\hgo$ be a nilpotent Lie algebra and $\ggo \subset \aff(\hgo)$ a solvable subalgebra with algebraic closure $\overline{\ggo}$. The corresponding solvable Lie group $G$ acts simply transitive on $H$ if and only if $\dim(\ggo) = \dim(\hgo)$ and the restriction $\ug_\ggo \subset \aff(\hgo) \to \hgo$ of the natural projection on the first component  is a bijection.
\end{theorem}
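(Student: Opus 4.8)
The plan is to reduce both implications to the purely nilpotent case handled by Dekimpe's Theorem \ref{thm:kd}, applied to the unipotent radical $U_G$ acting on $H$, and then to transfer the conclusion between $U_G$ and $G$ using the results already established in this section. Throughout, write $p_1 : \aff(\hgo) \to \hgo$ and $p_2 : \aff(\hgo) \to \Der(\hgo)$ for the two projections, so that $t = p_1|_\ggo$ and $D = p_2|_\ggo$. The key observation is that $U_G$ is a unipotent real algebraic group, hence a $1$-connected nilpotent Lie group, acting on the $1$-connected nilpotent group $H$ via the inclusion $\ug_\ggo \hookrightarrow \aff(\hgo)$. This is exactly the setting of Theorem \ref{thm:kd}, applied to $U_G$ and $H$: the relevant linear map is $p_1|_{\ug_\ggo} : \ug_\ggo \to \hgo$ and the relevant derivations are the $p_2(X)$ for $X \in \ug_\ggo$.

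The one structural input I would isolate first is that $p_2(X)$ is a nilpotent derivation for every $X \in \ug_\ggo$. This holds because $p_2$ is an algebraic morphism (it is the projection of $\aff(\hgo)$ onto its linear part, i.e.\ the quotient by the ideal $\hgo$ of translations), and algebraic morphisms preserve the Jordan decomposition; since every element of $\ug_\ggo$ is by definition nilpotent in $\overline{\ggo} \subset \aff(\hgo)$, its image $p_2(X)$ is nilpotent as well. Combined with Theorem \ref{thm:kd}, this shows that the action of $U_G$ on $H$ is simply transitive if and only if $p_1|_{\ug_\ggo} : \ug_\ggo \to \hgo$ is a bijection, since the nilpotency condition on the derivations is now automatic.

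For the forward implication, I would assume that $G$ acts simply transitive on $H$. Then the orbit map is a diffeomorphism, so $\dim(\ggo) = \dim(\hgo)$. Moreover, by the result of \cite{bd00-1} recalled just before Theorem \ref{thm:converse}, the group $U_G$ also acts simply transitive on $H$, and Theorem \ref{thm:kd} then forces $p_1|_{\ug_\ggo}$ to be a bijection, which is precisely the desired condition.

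For the converse, I would assume $\dim(\ggo) = \dim(\hgo)$ and that $p_1|_{\ug_\ggo}$ is a bijection. Again by Theorem \ref{thm:kd}, together with the automatic nilpotency of the derivations $p_2(X)$ established above, it follows that $U_G$ acts simply transitive on $H$. Since in addition $\dim(G) = \dim(\ggo) = \dim(\hgo) = \dim(H)$, Theorem \ref{thm:converse} applies and yields that $G$ itself acts simply transitive on $H$. The only genuinely non-formal point in the whole argument is the claim that $p_2$ preserves nilpotency, i.e.\ that the linear part of a nilpotent affine element is a nilpotent derivation; everything else is assembly on top of Theorem \ref{thm:kd}, the fact from \cite{bd00-1}, and Theorem \ref{thm:converse}.
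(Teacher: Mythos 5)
Your proof is correct and follows essentially the same route as the paper's: both directions reduce to Theorem \ref{thm:kd} applied to $U_G$, using the fact that simple transitivity of $G$ passes to $U_G$ for the forward implication and Theorem \ref{thm:converse} for the converse. Your explicit verification that $p_2(X)$ is nilpotent for $X \in \ug_\ggo$ (so that the hypothesis of Theorem \ref{thm:kd} is automatic) is a detail the paper leaves implicit, and it is a welcome addition.
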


\noindent Note that the second condition implies that the unipotent subgroup corresponding to $\ug_\ggo$ acts simply transitive by Theorem \ref{thm:kd}. 
\begin{proof}
First assume that $G$ acts simply transitive on $H$ via affine transformations. The equality $\dim(\ggo) = \dim(G) = \dim(H) = \dim(\hgo)$ is immediate since $G$ and $H$ are diffeomorphic. By \cite[Proposition 2.4.]{bdd07-1} we know that $U_G$ acts simply transitive on $H$. The Lie algebras corresponding to $U_G$ is equal to $\ug_\ggo$ and hence by Theorem \ref{thm:kd} the restriction map $\ug_\ggo \to \hgo$ is a bijection. This concludes the first direction of the proof.

For the other direction, assume that the map $\ug_\ggo \to \hgo$ is a bijection, where again $\ug_\ggo$ is the Lie algebra corresponding to $U_G$. Theorem \ref{thm:kd} implies that $U_G$ acts simply transitive on $H$. By Theorem \ref{thm:converse} we have that $G$ acts simply transitive on $H$, as we need.
\end{proof}

 On how to apply this result for concrete computations, we refer to Section \ref{sec:comp} which gives methods to compute the subalgebra $\ug_\ggo$ starting from some special basis for $\ggo$. Note that we don't have to compute $\tg$ for applying this theorem, which in general is the hardest part of the algebraic closure, we only need to know $\ug_\ggo$.

The next part of this section is dedicated to answering Question \ref{question2}. A first result in this direction uses algebraic embedding of the algebraic hull. In this context, an embedding means that the Lie algebra morphism is injective. 

\begin{theorem}
	\label{thm:q2}
	A $1$-connected solvable Lie group $G$ acts simply transitive on a nilpotent Lie group $H$ via affine transformations if and only if the algebraic hull $\overline{\ggo}$ embeds in $\aff(\hgo)$ as an algebraic subalgebra such that the map $t: \ug_\ggo \to \hgo$ is a bijection. Moreover, if there exists a simply transitive action, we can assume that $\overline{\ggo} \cap \Der(\hgo)$ is the Lie algebra corresponding to a maximal torus of the algebraic hull.
\end{theorem}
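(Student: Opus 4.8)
The plan is to prove the equivalence by showing that, whenever $G$ acts simply transitively, the algebraic closure $\overline{\ggo}$ of $\ggo$ inside $\aff(\hgo)$ is \emph{exactly} the algebraic hull, and conversely that any embedding of the hull with $t$ bijective produces a simply transitive action via Theorem~\ref{thm:q1}. The backward direction is the short one: if the algebraic hull embeds in $\aff(\hgo)$ as an algebraic subalgebra with $t \colon \ug_\ggo \to \hgo$ a bijection, then by condition~(1) of Definition~\ref{def:hull} this embedded hull is the algebraic closure of $\ggo$ in $\aff(\hgo)$, so its unipotent radical is precisely $\ug_\ggo$. Condition~(2) gives $\dim(\ggo) = \dim(\ug_\ggo)$, and the bijection $t$ gives $\dim(\ug_\ggo) = \dim(\hgo)$; hence $\dim(\ggo) = \dim(\hgo)$ and the projection $\ug_\ggo \to \hgo$ is a bijection. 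Theorem~\ref{thm:q1} then immediately yields that $G$ acts simply transitively on $H$.

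For the forward direction I would start from Theorem~\ref{thm:q1}, which tells us that $\dim(\ggo) = \dim(\hgo)$ and that the projection $t \colon \ug_\ggo \to \hgo$ is a bijection, where $\ug_\ggo$ is the unipotent radical of the algebraic closure $\overline{\ggo}$ of $\ggo$ in $\aff(\hgo)$. It remains to check that this $\overline{\ggo}$ satisfies the three conditions of Definition~\ref{def:hull}. Condition~(1) holds by construction, and condition~(2) follows since $\dim(\ug_\ggo) = \dim(\hgo) = \dim(\ggo)$. The whole difficulty is concentrated in condition~(3), namely that the centralizer of $\ug_\ggo$ in $\overline{\ggo}$ is contained in $\ug_\ggo$.

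To establish condition~(3), I would take $Z$ in the centralizer of $\ug_\ggo$ and use the Jordan decomposition $Z = Z_s + Z_n$ available inside the algebraic Lie algebra $\overline{\ggo}$. Since $\ug_\ggo$ is an ideal on which $\ad(Z)$ vanishes, the semisimple and nilpotent parts $\ad(Z_s)$ and $\ad(Z_n)$ also vanish on $\ug_\ggo$; moreover $Z_n$ is nilpotent and hence already lies in $\ug_\ggo$. Thus it suffices to prove $Z_s = 0$. As a semisimple element, $Z_s$ generates a torus, which has a fixed point $p \in H$ by \cite[Lemma 5.3.]{dere19-1}; conjugating by the translation carrying $p$ to the base point $e_H$ turns $Z_s$ into a pure derivation $(0, D_s) \in \Der(\hgo)$ while preserving simple transitivity, so the conjugated $\ug_\ggo$ still projects bijectively onto $\hgo$. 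Writing a general element of this $\ug_\ggo$ as $(w, N_w)$, the translation component of the bracket $[(0,D_s),(w,N_w)] = (D_s(w),[D_s,N_w])$ must vanish, giving $D_s(w) = 0$ for every $w \in \hgo$; hence the conjugate of $Z_s$ is zero and, conjugation being an automorphism of $\aff(\hgo)$, $Z_s = 0$ as well. I expect this control of the semisimple part --- producing the fixed point and checking that conjugation by a translation keeps the projection bijective --- to be the main obstacle of the whole proof.

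Finally, for the ``moreover'' statement I would invoke \cite[Lemma 5.3.]{dere19-1} once more: a maximal torus $T$ of $\overline{G}$ has a fixed point in $H$, and after conjugating the base point there the corresponding subalgebra $\tg$ consists of transformations fixing $e_H$, i.e.\ $\tg \subseteq \Der(\hgo)$. Since the projection $\ug_\ggo \to \hgo$ is a bijection, an element of $\ug_\ggo$ with trivial translation part is zero, so $\ug_\ggo \cap \Der(\hgo) = 0$. Decomposing $\overline{\ggo} = \ug_\ggo \oplus \tg$ and observing that an element $U + T$ (with $U \in \ug_\ggo$ and $T \in \tg \subseteq \Der(\hgo)$) lies in $\Der(\hgo)$ if and only if the translation part of $U$ vanishes, i.e.\ $U = 0$, we conclude $\overline{\ggo} \cap \Der(\hgo) = \tg$, as claimed.
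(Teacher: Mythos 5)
Your proof is correct, but the forward direction follows a genuinely different route from the paper's. The paper disposes of that direction in one line by citing \cite[Theorem 5.2.]{dere19-1}, which states directly that when the action is simply transitive the algebraic closure $\overline{G} < \Aff(H)$ \emph{is} the algebraic hull of $G$; the only work left is identifying the stabilizer of $e \in H$ as a maximal torus (re-using the argument from Theorem \ref{thm:converse}) to get the ``moreover'' clause. You instead re-derive the hull property from scratch by verifying the three conditions of Definition \ref{def:hull}, concentrating the difficulty in condition (3) and resolving it with a Jordan-decomposition argument: the semisimple part $Z_s$ of a centralizing element has a fixed point, can be conjugated into $\Der(\hgo)$, and then the vanishing of the translation component of $[Z_s, \cdot]$ on $\ug_\ggo$ together with surjectivity of $t$ forces $Z_s = 0$. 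This is sound (note only that the fixed-point lemma is stated for \emph{maximal} tori, so you should first embed the torus generated by $Z_s$ into a maximal one), and it buys a self-contained proof that makes visible \emph{why} simple transitivity forces the centralizer condition, at the cost of length; the paper's version buys brevity by deferring to the literature. Your ``moreover'' argument additionally nails down the exact equality $\overline{\ggo} \cap \Der(\hgo) = \tg$ via injectivity of $t$ on $\ug_\ggo$, which the paper leaves implicit. One small point you gloss over in the backward direction: Theorem \ref{thm:q1} gives simple transitivity for the connected subgroup of $\Aff(H)$ with Lie algebra $\varphi(\ggo)$, and one should observe (as the paper does) that this group is $1$-connected, being diffeomorphic to $H$, hence isomorphic to the given $G$.
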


\begin{proof}
First assume the action $\rho: G \to \Aff(H)$ is simply transitive. In particular the map $\rho$ is injective and we can identify $G$ and $\rho(G)$. If $G$ acts simply transitive on $H$, \cite[Theorem 5.2.]{dere19-1} implies that $\overline{G} < \Aff(H)$ is the algebraic hull for $G$, where $U(G)$ acts simply transitive. Hence the Lie algebra of $\overline{G}$ is a subalgebra of $\aff(\hgo)$ such that $t: \ug_\ggo \to \ngo$ is a bijection by Theorem \ref{thm:kd}.

For the last part of the theorem, take $T$ the stabilizer of the identity element $e \in H$ in $\overline{G}$, so $T \subset \Aut(H)$. Just as in Theorem \ref{thm:converse}, we have that $T$ is a maximal torus for the group $\overline{G}$. Its Lie algebra $\tg$ is hence a subalgebra of $\Der(\hgo)$. 

For the other direction, assume that the conditions of the theorem hold. Since $\overline{\ggo}$ is an algebraic subalgebra of $\aff(\hgo)$, we also have an embedding $\varphi: \ggo \to \aff(\hgo)$ such that the algebraic closure of $\varphi(\ggo)$ is equal to $\overline{\ggo}$. By Theorem \ref{thm:q1} we get that the action of the corresponding Lie group on $H$ is simply transitive. In particular, the corresponding Lie group is simply connected and hence isomorphic to $G$, leading to a simply transitive action of $G$ on $H$.
\end{proof}

Since the semisimple splitting and the algebraic hull have the same nilradical which is the nilshadow of $\ggo$, one can check the conditions of Theorem \ref{thm:q1} from the semisimple splitting. This avoids all difficulties for computing the algebraic hull and the condition of being an algebraic embedding.

\begin{corollary}
	\label{cor:q2}
	A $1$-connected solvable Lie group $G$ acts simply transitive on a nilpotent Lie group $H$ via affine transformations if and only if the semisimple splitting $\ggo^\prime = \ngo \rtimes \tg$ embeds in $\aff(\hgo)$ such that $\ngo$ consists of nilpotent elements, $\tg$ of semisimple elements and the map $t: \ngo \to \hgo$ is a bijection. Moreover, a simply transitive action exists, we can assume that the semisimple part $\tg$ of $\ggo^\prime$ is a subalgebra of $\Der(\hgo)$.
\end{corollary}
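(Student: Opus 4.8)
The plan is to deduce the corollary from Theorem~\ref{thm:q2}, together with the description of the semisimple splitting as the subalgebra $\ggo^\prime = \ggo + \ug_\ggo$ of the algebraic hull provided by Theorem~\ref{relationssandhull}; the crucial point throughout is that the semisimple splitting and the algebraic hull share the same nilradical $\ngo = \ug_\ggo$, namely the nilshadow of $\ggo$. I would prove the two implications separately, using Theorem~\ref{thm:q2} for the forward direction and Theorem~\ref{thm:q1} for the converse.

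For the forward direction, suppose $G$ acts simply transitively. By Theorem~\ref{thm:q2} the algebraic hull $\overline{\ggo}$ embeds in $\aff(\hgo)$ with $t\colon \ug_\ggo \to \hgo$ a bijection, and we may assume $\tg'' := \overline{\ggo} \cap \Der(\hgo)$ is the Lie algebra of a maximal torus, so that $\overline{\ggo} = \ug_\ggo \rtimes \tg''$ with $\tg''$ consisting of semisimple elements lying in $\Der(\hgo)$. Restricting this embedding to the subalgebra $\ggo^\prime = \ggo + \ug_\ggo$ of Theorem~\ref{relationssandhull} realizes the semisimple splitting inside $\aff(\hgo)$ with nilradical $\ngo = \ug_\ggo$, so that $\ngo$ consists of nilpotent elements and $t\colon \ngo \to \hgo$ is a bijection. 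To produce the complement $\tg$ I would take the image of $\ggo^\prime$ under the projection $\overline{\ggo} \to \overline{\ggo}/\ug_\ggo \cong \tg''$; a short argument shows this image is already contained in $\ggo^\prime$ (since $\ug_\ggo \subseteq \ggo^\prime$), giving a decomposition $\ggo^\prime = \ngo \rtimes \tg$ with $\tg \subseteq \tg'' \subseteq \Der(\hgo)$ abelian and semisimple, which is exactly the required embedding.

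For the converse, assume the semisimple splitting embeds as $\ggo^\prime = \ngo \rtimes \tg$ with $\ngo$ nilpotent, $\tg$ semisimple and $t\colon \ngo \to \hgo$ a bijection. Viewing $\ggo$ as a subalgebra of $\ggo^\prime$, I would apply Theorem~\ref{thm:q1} to the induced embedding $\ggo \subset \aff(\hgo)$; since $\dim \ggo = \dim \ngo = \dim \hgo$, it remains to show that the unipotent radical $\ug_\ggo$ of the algebraic closure $\overline{\ggo}$ of $\ggo$ equals $\ngo$, for then $\ug_\ggo \to \hgo$ is the bijection $t$. The inequality $\dim \ug_\ggo \le \dim \ggo = \dim \ngo$ from Theorem~\ref{TcapG} reduces this to the inclusion $\ngo \subseteq \overline{\ggo}$, as $\ngo$ consists of nilpotent elements and so $\ngo \subseteq \overline{\ggo}$ would force $\ngo \subseteq \ug_\ggo$ and hence equality. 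To prove $\ngo \subseteq \overline{\ggo}$ I would use condition (4) of the semisimple splitting, writing $\ngo = (\ngo \cap \ggo) + \cg$ with $\cg$ the centralizer of $\tg$ in $\ngo$: the summand $\ngo \cap \ggo$ already lies in $\ggo \subseteq \overline{\ggo}$, and for $c \in \cg$ the unique element $X = c + t_X \in \ggo$ with $\ngo$-component $c$ has commuting semisimple and nilpotent operator parts $t_X$ and $c$ (because $c$ centralizes $\tg$), so that $c = X_n \in \overline{\ggo}$.

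The main obstacle is precisely this last step: the decomposition $X = n_X + t_X$ coming from the semisimple splitting is not the operator Jordan decomposition of $X$ in general, since $n_X$ and $t_X$ need not commute inside $\aff(\hgo)$. The observation that rescues the argument is that on the centralizer $\cg$ this obstruction vanishes, which is exactly where condition (4) enters and why it is indispensable. Once $\cg \subseteq \overline{\ggo}$ is established, the remaining dimension bookkeeping and the appeals to Theorems~\ref{thm:q1} and~\ref{thm:q2} are routine.
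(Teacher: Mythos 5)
Your proof is correct and follows essentially the same route as the paper: the forward direction combines Theorem~\ref{thm:q2} with Theorem~\ref{relationssandhull}, and the converse hinges on exactly the paper's key step, namely that condition (4) reduces everything to showing $\cg \subset \overline{\ggo}$, which holds because for $c \in \cg$ the decomposition $c + t_X$ of the corresponding element of $\ggo$ is its operator Jordan decomposition. The only cosmetic differences are that you conclude by comparing $\dim \ug_\ggo$ with $\dim \ngo$ via Theorem~\ref{TcapG} instead of identifying $\overline{\ggo}$ with $\ngo \rtimes \overline{\tg}$, and that you spell out the forward direction in more detail than the paper's one-line appeal; both are fine.
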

\begin{proof}
One direction, namely if the group $G$ acts simply transitive on $H$, then the semisimple splitting embeds in $\aff(\hgo)$ with the conditions of the corollary, is immediate from a combination of Theorem \ref{relationssandhull} and Theorem \ref{thm:q2}.

For the other direction, consider the semisimple splitting $\ggo^\prime$ as a subalgebra of $\aff(\hgo)$ and hence also $\ggo$ as a subalgebra of $\aff(\hgo)$. First we show that $\overline{\ggo}$ is also the algebraic closure of $\ggo^\prime$, where the latter is equal to $\ngo \rtimes \overline{\tg}$ by \cite[Theorem 4.3.6.]{degr17-1}. For this it suffices to show that $\ngo \subset \overline{\ggo}$, because $\ngo + \ggo = \ggo^\prime$. By condition (4) of the semisimple splitting it is enough to show that $\cg \subset \overline{\ggo}$ with $\cg$ the centralizer of $\tg$ in $\ngo$. This is immediate, since if $X \in \cg$, we can write $X = Y + Z$ with $Y \in \ggo$ and $Z \in \tg$ and thus also $Y = X - Z$. This is exactly the Jordan decomposition of $Y$ because $X$ and $Z$ commute and are respectively nilpotent and semisimple. Since $\overline{\ggo}$ is algebraic it contains the nilpotent and semisimple parts of every element and thus $X \in \overline{\ggo}$. The statement now follows by applying Theorem \ref{thm:q1}.
\end{proof}

We will call a Lie algebra morphism as in Corollary \ref{cor:q2} simply transitive, as it corresponds to a simply transitive action between the corresponding Lie groups. 

Note that this result does not provide us with extra tools to check whether there exists a simply transitive NIL-affine action of one nilpotent Lie group on another. It does show that the study of NIL-affine actions of solvable Lie groups starts with studying the case of nilpotent Lie groups and applying this to the nilshadow $\ug_\ggo$ of the semisimple splitting. Afterwards one needs to check whether the semisimple part $\tg$ can also be embedded in $\Der(\hgo)$, compatible with the embedding of $\ug_\ggo$. 

In the special case where $\hgo$ is the nilshadow of $\ggo$, we always have a simply transitive action.

\begin{corollary}
\label{cor:onnilrad}
Let $G$ be a $1$-connected solvable Lie group, then $G$ acts simply transitive via affine transformations on its nilshadow.
\end{corollary}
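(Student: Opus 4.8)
The plan is to read off the statement as the special case of Corollary \ref{cor:q2} in which the target nilpotent Lie algebra is the nilshadow itself. Write the semisimple splitting as $\ggo^\prime = \ngo \rtimes \tg$, so that the nilshadow is the $1$-connected nilpotent Lie group $H$ with Lie algebra $\hgo = \ngo$. By Corollary \ref{cor:q2} it then suffices to produce one embedding of $\ggo^\prime$ into $\aff(\ngo)$ satisfying the three listed requirements, and the natural tautological map is the obvious candidate.

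Concretely, let $\theta : \tg \to \Der(\ngo)$ be the representation by which $\tg$ acts on $\ngo$ inside $\ggo^\prime$, so that $\theta(s) = \ad(s)|_\ngo$; by condition (2) of the semisimple splitting each $\theta(s)$ is a semisimple derivation. I would set
\[
\Phi : \ggo^\prime = \ngo \rtimes \tg \longrightarrow \aff(\ngo) = \ngo \rtimes \Der(\ngo), \qquad \Phi(X + s) = (X, \theta(s)).
\]
Verifying that $\Phi$ is a morphism of Lie algebras is a short computation; the only non-formal point is that the $\Der(\ngo)$-component of $[\Phi(X_1+s_1),\Phi(X_2+s_2)]$ equals $[\theta(s_1),\theta(s_2)]$, and this vanishes because $\tg$ is abelian, matching the vanishing $\tg$-component of the bracket in $\ggo^\prime$.

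For injectivity I would first check that $\theta$ is injective: if $\theta(s)=0$ then $s$ centralizes $\ngo$, and since $\tg$ is abelian it centralizes all of $\ggo^\prime$, so $s$ lies in the center and hence in the nilradical $\ngo$; as $\ngo \cap \tg = 0$ this forces $s=0$. Consequently $\Phi(X+s) = (X,\theta(s)) = 0$ gives $X=0$ and $s=0$, so $\Phi$ is an embedding. The remaining hypotheses of Corollary \ref{cor:q2} are then immediate from the shape of $\Phi$: the image $\Phi(\ngo) = \{(X,0)\}$ lies in the translation part, i.e.\ the unipotent radical of $\aff(\ngo)$, hence consists of nilpotent elements; the image $\Phi(\tg) = \{(0,\theta(s))\}$ consists of semisimple elements because each $\theta(s)$ is a semisimple derivation; and the first-component (translation) map $t : \ngo \to \ngo$ is simply $X \mapsto X$, which is trivially a bijection. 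Applying Corollary \ref{cor:q2} then yields the simply transitive action of $G$ on its nilshadow.

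The only genuinely delicate step is the identification in $\aff(\ngo)$ of nilpotent elements with pure translations and of semisimple elements with pure semisimple derivations. This rests on the fact that the Jordan decomposition in the algebraic group $\Aff(\ngo) = \ngo \rtimes \Aut(\ngo)$ is compatible with its semidirect-product structure, so that an element supported on the translation part is unipotent while an element whose derivation part is a semisimple derivation is itself semisimple. I expect this compatibility, together with the injectivity of $\theta$, to be the main (though mild) obstacle; everything else reduces to the bracket bookkeeping above.
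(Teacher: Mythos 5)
Your proposal is correct and follows exactly the paper's argument: the paper likewise deduces the corollary from Corollary \ref{cor:q2} by embedding $\ggo^\prime = \ngo \rtimes \tg$ tautologically into $\aff(\ngo)$ with $\tg$ acting as derivations, so that $t$ is the identity. You have merely spelled out the routine verifications (that the map is a morphism, injective, and lands in nilpotent resp.\ semisimple elements) that the paper leaves implicit.
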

\begin{proof}
This follows immediately from Corollary \ref{cor:q2} since if $\ggo^\prime = \ngo \rtimes \tg$ embeds directly into $\aff(\ngo)$ by considering $\tg$ as derivations of $\ngo$. The map $t: \ngo \to \ngo$ is the identity map and hence a bijection.
\end{proof}

For concrete applications, the following lemma about the eigenvalues of derivations will be useful. It will be the main tool for ruling out possibilities lateron. 

\begin{lemma}\label{spec}
Let $\ngo \rtimes \tg$ be a subalgebra of $\aff(\hgo)$ as in Corollary \ref{cor:q2} with $\tg \subset \Der(\hgo)$, then for every $S \in \tg$ it holds that $S$ and $\restr{\ad_S}{ \ngo}$ have the same eigenvalues with the same multiplicities.
\end{lemma}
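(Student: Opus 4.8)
The plan is to use the bijection $t\colon \ngo \to \hgo$ to exhibit $\restr{\ad_S}{\ngo}$ and $S$ as conjugate linear operators; since conjugate operators have the same characteristic polynomial, this immediately gives equality of eigenvalues with the same multiplicities. The only identity I need is that $t$ intertwines the two maps.

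First I would set up notation. In $\aff(\hgo) = \hgo \rtimes \Der(\hgo)$ the bracket reads
$$[(v_1,D_1),(v_2,D_2)] = \left([v_1,v_2] + D_1(v_2) - D_2(v_1),\ [D_1,D_2]\right),$$
and $t$ is the restriction to $\ngo$ of the (linear) projection $\pi\colon \aff(\hgo)\to\hgo$ onto the first factor. Because $\tg\subset\Der(\hgo)$, each $S\in\tg$ is the element $(0,S)$, while each $X\in\ngo$ can be written as $(t(X),D_X)$ for a suitable $D_X\in\Der(\hgo)$. Since $\ngo$ is the nilradical of $\ngo\rtimes\tg$ it is an ideal, so $\restr{\ad_S}{\ngo}$ indeed maps $\ngo$ into $\ngo$ and the statement makes sense.

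The key computation is then a one-liner: for $X\in\ngo$,
$$\ad_S(X) = [(0,S),(t(X),D_X)] = \left(S(t(X)),\ [S,D_X]\right),$$
so the first component of $\ad_S(X)$ is $S(t(X))$. Applying $\pi$ (i.e.\ reading off the first component) yields $t(\ad_S(X)) = S(t(X))$ for every $X\in\ngo$, that is, $t\circ\restr{\ad_S}{\ngo} = S\circ t$. As $t$ is a bijection this rearranges to $\restr{\ad_S}{\ngo} = t^{-1}\circ S\circ t$, so the two operators are conjugate and share the same characteristic polynomial, hence the same eigenvalues with the same multiplicities.

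I do not expect a genuine obstacle: the whole argument hinges on the single observation that the first-component projection intertwines $\restr{\ad_S}{\ngo}$ with $S$ and is invertible on $\ngo$ by hypothesis. The only point deserving a word of care is the well-definedness of $\restr{\ad_S}{\ngo}$, which is guaranteed by $\ngo$ being an ideal of the semidirect product $\ngo\rtimes\tg$.
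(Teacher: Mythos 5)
Your proof is correct and rests on the same key computation as the paper's: reading off the first component of $[S,X]$ for $X\in\ngo$ gives $t(\ad_S(X)) = S(t(X))$. The only difference is packaging --- the paper extends scalars to $\CC$ and matches eigenvectors of $\restr{\ad_S}{\ngo}$ with eigenvectors of $S$ one at a time (implicitly using that these operators are semisimple), whereas your intertwining identity $\restr{\ad_S}{\ngo} = t^{-1}\circ S\circ t$ yields equality of characteristic polynomials directly, which is marginally cleaner and needs no diagonalizability.
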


Here we consider the eigenvalues of $D$ as a derivation on $\hgo$ and $\ad_S$ is the adjoint of $S$ as an element of $\ngo \rtimes \tg$.
\begin{proof}
	By extending the scalars to $\CC$ we can assume that the element $S$ is diagonalizable, so from now on we work with Lie algebras over $\CC$. Let $t: \ngo \to \hgo$ and $D: \ngo \to \Der(\hgo)$ be the natural projections on the first and second component. Now for every eigenvector $X = t(X) + D(X) \in \ngo$ for eigenvalue $\lambda \in \CC$ of $\ad_S$, we have that $$ \lambda (t(X) + D(X)) = [S,t(X) + D(X)] =  \underbrace{S(t(X))}_{\in \ngo} + \underbrace{[S,D(X)]}_{\in \tg}$$ by the definition of the bracket on the semidirect product. This implies that $t(X)$ is an eigenvector of $S$ for the same eigenvalue $\lambda$. Since we can do this for every eigenvector and $t$ is an isomorphism by assumption, the statement follows.
\end{proof}

\section{Computing the nilshadow and the semisimple splitting}
\label{sec:comp}

In this section, we give some comments about how to compute the nilshadow of a solvable Lie algebra, which will be convenient for the applications in the following sections.

Recall that we denote by $X_n$ the nilpotent part in the additive Jordan decomposition of $X \in \glg(n,\RR)$. For applying Theorem \ref{thm:q1} it is crucial to compute the algebra $\ug_{\ggo}$ of a given solvable Lie algebra $\ggo$. Unfortunately, $\ug_\ggo$ is not given by the nilpotent parts $X_n$ of all its elements $X \in \ggo$.

\begin{example}

	Consider the solvable subalgebra $$\ggo = \left\{\begin{pmatrix} 0 & 0 & x \\ 0 & x & y \\ 0 & 0 & 0 \end{pmatrix} \suchthat x, y \in \RR \right\} \subset \glg(n,\RR), $$ which is isomorphic to the solvable non-abelian Lie algebra of dimension $2$. Note that the additive Jordan decomposition of every element $X = X_n + X_s \in \ggo$ is given by \begin{align*} X =  \begin{pmatrix} 0 & 0 & x \\ 0 & x & y \\ 0 & 0 & 0 \end{pmatrix} =  \begin{pmatrix} 0 & 0 & x \\ 0 & 0 & 0 \\ 0 & 0 & 0 \end{pmatrix} +  \begin{pmatrix} 0 & 0 & 0 \\ 0 & x & y \\ 0 & 0 & 0 \end{pmatrix}= X_n + X_s\end{align*} for $x \neq 0$, and $ X = X_n$ otherwise.
Hence the algebraic closure $\overline{\ggo}$ is equal to \begin{align*} \overline{\ggo} = \left\{\begin{pmatrix} 0 & 0 & x \\ 0 & z & y \\ 0 & 0 & 0 \end{pmatrix} \suchthat x, y , z \in \RR \right\} \end{align*} with \begin{align*} \ug_{\ggo} = \left\{\begin{pmatrix} 0 & 0 & x \\ 0 & 0 & y \\ 0 & 0 & 0 \end{pmatrix} \suchthat x, y  \in \RR \right\}.\end{align*} This shows that the map $N: \ggo \to \ug_\ggo: X \mapsto X_n$ is in general not injective nor surjective. Moreover, for a general basis $X_1, \ldots, X_k$ for $\ggo$, one does not have the property that $\left(X_i\right)_n$ spans $\ug_\ggo$.
\end{example}

The previous example demonstrated that in order to compute $\ug_\ggo$ we need some extra tools. 
\begin{proposition}
	\label{propforcomputation} Let $\ggo \subset \glg(n,\RR)$ be a solvable subalgebra with $l = \dim([\ggo,\ggo])$. Consider a basis $X_1, \ldots, X_k$ of $\ggo$ such that the vectors $X_1, \ldots, X_l$ span the commutator subalgebra $[\ggo,\ggo]$ of $\ggo$, then the elements $\left(X_i\right)_n$ span the ideal $\ug_\ggo$ of all nilpotent elements in the algebraic closure $\overline{\ggo}$ of $\ggo$.
\end{proposition}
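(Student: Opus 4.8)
The plan is to reduce to an abelian situation via the commutator subalgebra, exactly as in the proof of Theorem \ref{TcapG}, where the otherwise non-linear assignment $X \mapsto X_n$ becomes linear. Write $\mg = [\ggo,\ggo]$ and $\ngo = \ug_\ggo$, and recall from that proof that $\mg \subset \ngo$ equals the commutator subalgebra of $\overline{\ggo}$ and that the quotient $\overline{\ggo}/\mg = \ngo/\mg \oplus \tg$ is abelian.

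First I would check that each $(X_i)_n$ genuinely lies in $\ug_\ggo$: since $\overline{\ggo}$ is algebraic it contains the nilpotent and semisimple parts of all of its elements, so $(X_i)_n \in \overline{\ggo}$ is nilpotent and hence belongs to $\ug_\ggo$. Next, for the indices $i \le l$ the vectors $X_i$ span $\mg$, and since $\mg \subset \ug_\ggo$ consists only of nilpotent elements, each such $X_i$ is already nilpotent; thus $(X_i)_n = X_i$ and these $l$ vectors span $\mg$.

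The key step is to control the images of the $(X_i)_n$ in the abelian quotient $\ug_\ggo/\mg$. Let $N: \ggo \to \ug_\ggo$, $X \mapsto X_n$, and let $q: \ug_\ggo \to \ug_\ggo/\mg$ be the projection. Because the quotient map $\pi: \overline{\ggo} \to \overline{\ggo}/\mg$ is an algebraic morphism onto an abelian Lie algebra, it preserves the Jordan decomposition, so $(X+\mg)_n = X_n + \mg$ for every $X \in \ggo$; hence $q \circ N$ coincides with the linear map $\varphi_1: X \mapsto (X+\mg)_n$ of Theorem \ref{TcapG}, which is surjective onto $\ug_\ggo/\mg$. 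Consequently, since $q \circ N$ is linear and surjective and the $X_i$ span $\ggo$, the images $q((X_i)_n) = (X_i)_n + \mg$ span the quotient $\ug_\ggo/\mg$.

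Finally I would combine the two spanning statements using the elementary fact that a spanning set of a subspace together with any lift of a spanning set of the quotient spans the whole space: the vectors $(X_1)_n, \ldots, (X_l)_n$ span $\mg$, while all the $(X_i)_n + \mg$ span $\ug_\ggo/\mg$, so $(X_1)_n, \ldots, (X_k)_n$ span $\ug_\ggo$. The only genuinely delicate point is the identity $(X+\mg)_n = X_n + \mg$, i.e.\ that passing to the abelian quotient converts the non-linear map $X \mapsto X_n$ into the linear $\varphi_1$; this rests on the preservation of Jordan decomposition under the algebraic quotient morphism together with the abelianness of $\overline{\ggo}/\mg$, both of which are already exploited in the proof of Theorem \ref{TcapG}.
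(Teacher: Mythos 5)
Your proof is correct and takes essentially the same route as the paper's: the nilpotency of $\mg=[\ggo,\ggo]$ handles the first $l$ basis vectors, and the surjective linear map $\varphi_1$ from the proof of Theorem \ref{TcapG} handles the spanning of the quotient $\faktor{\ug_\ggo}{\mg}$, after which the two pieces are glued together. The only difference is that you make explicit the membership $(X_i)_n \in \ug_\ggo$ and the identity $(X+\mg)_n = X_n + \mg$ (via preservation of the Jordan decomposition under the algebraic quotient onto an abelian algebra), which the paper leaves implicit.
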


\begin{proof}
Write $\mg = [\ggo,\ggo]$ for the commutator subalgebra of $\ggo$, then the map $\mg \to \ug_\ggo: X \mapsto X_n$ is a linear map since $\mg$ is nilpotent. In particular, the vectors $\left(X_1\right)_n, \ldots, \left(X_l\right)_n$ span the vector space $\mg$. It suffices to show that the projections of $\left(X_{l+1}\right)_n, \ldots, \left(X_k\right)_n$ span the vector space $\faktor{\ug_\ggo}{\mg}$.

Consider $\overline{\ggo} = \ug_\ggo \rtimes \tg$ with $\tg$ the Lie algebra corresponding to a maximal torus. The ideal $\mg$ is also the commutator subalgebra of $\ggo$ by \cite[Lemma 4.3.17]{degr17-1}. Just as in the proof of Theorem \ref{TcapG}, we consider the projection $\varphi_1: \faktor{\ggo}{\mg} \to \faktor{\ug_\ggo}{\mg}$ on the first component, which is surjective. Since $\varphi_1(X_i) = \left(X_i\right)_n + \mg$, the result now follows.
\end{proof}

In the light of Theorem \ref{thm:q1} this gives us a method for checking whether a given morphism $\varphi:\ggo \to \aff(\hgo)$ corresponds to a simply transitive action. Note that the information does not lie in the seperate maps $t$ and $D$, but in how they interact, as is clear in the following example.

\begin{example}	\label{ex:notDtonly}
Consider the map $\varphi: \RR \to \aff(\RR) = \RR \rtimes \RR: x \mapsto (x,x)$. Note that $\varphi(x)$ is semisimple for every $x \in \RR$ and hence $\ug_{\varphi(\RR)} = 0$ by Proposition \ref{propforcomputation}. This implies that the corresponding affine action of $\RR$ is not simply transitive. Note that $t: \varphi(\RR) \to \hgo$ is bijective for this example.
\end{example}

For many low-dimensional examples the semisimple splitting $\ggo^\prime$ is isomorphic to the algebraic hull $\overline{\ggo}$. We give an example in dimension $3$ where this is not the case. This illustrates why Corollary \ref{cor:q2} is easier to use than Theorem \ref{thm:q2}, since one only needs to compute the nilshadow.

\begin{example}
	\label{ex:ssvshull}
Consider the solvable Lie algebra $ \ggo = \RR^2 \rtimes_D \RR$ where $D$ is the derivation $D$ given by the matrix $$D = \begin{pmatrix} 1 & 0 \\ 0 & \alpha \end{pmatrix}$$ with $\alpha \in \RR$. The semisimple splitting of $\ggo$ is equal to $\ggo \oplus \RR$, but it is algebraic if and only if $\alpha \in \mathbb{Q}$, see \cite[Section 4.4]{degr17-1}. If $\alpha \notin \mathbb{Q}$, then its algebraic hull is isomorphic to $\left(\RR^2 \rtimes_{D_1,D_2} \RR^2\right) \oplus \RR$ with 
\begin{align*}
D_1 &= \begin{pmatrix} 1 & 0 \\ 0 & 0 \end{pmatrix}\\
D_2 &= \begin{pmatrix} 0 & 0 \\ 0 & 1 \end{pmatrix}.
\end{align*}
Note that, although the derivation $D$ is semisimple, it is not semisimple as an element of the algebraic hull, see Corollary \ref{cor:noss}.
\end{example} 
In many cases, the semisimple splitting is easier to compute, since one does not have to take the algebraic closure of a maximal torus. Also, we don't have to worry about embeddings being algebraic as is needed for Theorem \ref{thm:q1}. Therefore Corollary \ref{cor:q2} is the version which is most applicable in practice.

We recall now a direct construction of the semisimple splitting and the nilshadow associated to a given Lie algebra, which we will apply in the next sections. The nilshadow can be determined in two slightly different ways according to \cite[page 73]{dungey03-1} (see also \cite{cf11-1,kasu13-1}).

Let $\ggo$ be a solvable Lie algebra, and $\ngo$ its nilradical. Each derivation $\ad_X$ with $X\in\ggo$ has a Jordan decomposition $\ad_X= (\ad_X)_s + (\ad_X)_n$ with both endomorphisms $\left(\ad_X\right)_s, \left(\ad_Y)\right)_n \in \Der(\ggo)$.
According to \cite[Proposition III.1.1.]{dungey03-1}, there exists a vector space $\mathfrak b$ of $\ggo$ such that $\ggo = \mathfrak b \oplus \ngo$ is the direct sum of vector spaces and for any $X,Y \in \mathfrak b$, it holds that $(\ad_X)_s(Y)=0$. Moreover, the bracket between the semisimple parts satisfies $[\ad(\mathfrak b)_s, \ad(\mathfrak b)_s]=0$.

By considering $\mathfrak b$ as an abelian Lie algebra, the semisimple splitting of $\ggo$ is the semidirect product
$$\ggo'= \mathfrak b\ltimes_{\ad_{(\dots)_s}} \ggo,$$ 
with Lie brackets
$$[(A, X), (B, Y)] = (0, [X, Y] + (\ad_A)_s(Y) - (\ad_B)_s(X)).$$
The nilshadow of $\ggo$, i.~e.~the nilradical of $\ggo'$, is $$\ngo=\{(-X_{\mathfrak b},X) \suchthat  X\in\ggo\},$$
where $X_{\mathfrak b}$ denotes the component of $X$ in $\mathfrak b$.
One can also see the semisimple splitting of $\ggo$ as $$\ggo'= \Ima \ad_s\ltimes \ggo,$$
where $\ad_s:\ggo\to\Der\ggo$.
In this case the nilshadow, which we denote as $\ug$ from now on, can be written as $$\ug=\{X-(\ad_X)_s \in\ggo': X\in\ggo\}.$$ It follows from \cite[Proposition 2.3]{kasu13-2} that $$\ggo'= \Ima \ad_s\ltimes \ug,$$ where $\ug$ is the nilradical of $\ggo^\prime$.

From the latter characterization of the semisimple splitting we can obtain its relation with the algebraic hull as in Theorem \ref{relationssandhull}. We take the Lie group homomorphism $\operatorname{Ad}_s : G \to \Aut(\ug)$ which
corresponds to the Lie algebra homomorphism $\ad_s$. Let $U$ the nilpotent Lie group with Lie algebra $\ug$, then we have
have $\operatorname{Ad}_s(G) \subset \Aut(U)$ where $\Aut(U)$ is an algebraic group. It follows from \cite[Proposition 2.4.]{kasu13-1} that 
$T\ltimes U$ is the algebraic hull of $G$, where $T$ is the Zariski closure of $\operatorname{Ad}_s(G)$ in $\Aut(U)$. 

We demonstrate how to compute the semisimple-splitting $\ggo'$ and the nilshadow for a given Lie algebra. These methods will be implicitly used in the following sections.
\begin{example}\label{example_dim4}
 Consider the $4$-dimensional  Lie algebra $\mathfrak r\mathfrak r_3$ with basis $e_1, e_2, e_3, e_4$ and structure constants given by $[e_1, e_2] = e_2$, $[e_1, e_3] = e_2 +e_3 $ in a basis $\{e_1,e_2,e_3,e_4\}.$ Clearly, the nilradical of $\ggo$ is $\ngo=\text{span}\{e_2,e_3,e_4\}$ and the Jordan decomposition of $\ad_{e_1}|_\ngo$ is 
$$\ad_{e_1}=\begin{pmatrix}

1&1&\\
&1&\\
&&0\\
\end{pmatrix}=\begin{pmatrix}

1&&\\
&1&\\
&&0\\
\end{pmatrix}+
\begin{pmatrix}

0&1&\\
&0&\\
&&0\\
\end{pmatrix}
.$$ 
%We denote by $S$ the semisimple part and by $T$ the nilpotent part of $ad_{e_1}$. 
Therefore, the semisimple splitting of $\mathfrak r\mathfrak r_3$ can be written as $\ggo'=\RR\ltimes_S(\RR\ltimes_T\RR^3)$ where $S=\text{diag}(0,1,1,0)$ and $T$ is the nilpotent part of $\ad_{e_1}$. The structure constants are $[e_0, e_2] = e_2$, $[e_0, e_3] = e_3$, $[e_1, e_3] = e_2$ in a basis $\{e_0,e_1,e_2,e_3,e_4\}.$ It is clear that the nilradical of $\ggo'$, which is the nilshadow of $\ggo$, is $\mathfrak n(\ggo')=\mathfrak r\mathfrak h_3$, and $\ggo'=\RR\ltimes\mathfrak r\mathfrak h_3$ with Lie brackets in the renamed basis $\{e_1,e_2,e_3,e_4,e_5\}$ given by $[e_1, e_3] = e_3$, $[e_1, e_4] = e_4$, $[e_2, e_3] = e_4$.

\end{example}

\section{Simply transitive NIL-affine actions in dimension $3$}
\label{sec:dim3}
In this section we determine whether there exists a simply transitive action for all pairs $(\ggo,\hgo)$ of $3$-dimensional solvable Lie algebras where $\ggo$ is solvable and $\hgo$ is nilpotent. 

As we mentioned before, if $\ggo$ is nilpotent it falls into Theorem \ref{thm:kd} and so it was shown in \cite{bdd07-1} that every possibility occurs up to dimension $5$. If $\hgo$ is abelian, then we have the usual affine case and these possibilities, equivalent to the existence of complete pre-Lie algebra structures, were studied in several papers, see for instance \cite{kim86-1}. Therefore we focus on pairs $(\ggo,\hgo)$ where $\ggo$ is solvable non-nilpotent and $\hgo$ is nilpotent non-abelian. Note that in dimension $2$, the only nilpotent Lie algebra is in fact the abelian one and hence there are no new cases. There is only one nilpotent non-abelian Lie algebra in dimension $3$, the Heisenberg Lie algebra $\hgo_3$, so in the remainder of this section we focus on actions on the Heisenberg group $H_3$. We will only consider the canonical basis $\{f_1,f_2,f_3\}$ of $\hgo_3$ such that $[f_1,f_2]=f_3$. In Table \ref{tab:dim3} we introduce other notations for the $3$-dimensional solvable non-abelian Lie algebras.

\begin{table}[h]

	\centering
	\def\arraystretch{1.4}
	
	%	{\resizebox{1 \textwidth}{!}{
	\begin{tabular}{|l|l|}
			
		%\toprule
		\hline
		Lie algebra &  Lie bracket \\
		%		\toprule 
		%\endfirsthead
		\hline
		
		%	$\mathfrak {aff}(\R)$ & $[e_1, e_2] = e_2$ & (0,-12)\\
		%	\hline
		
		$\mathfrak h_3$ & $[e_1, e_2] = e_3$ \\
		\hline
		
		$\mathfrak r_3$ &  $[e_1, e_2] = e_2$, $[e_1, e_3] = e_2 + e_3$ \\			
		\hline	
		
		$\mathfrak r_{3,\lambda}$ &  $[e_1, e_2] = e_2$, $[e_1, e_3] = \lambda e_3$ \\			
		\hline
		
		$\mathfrak r'_{3,\lambda}$ &  $[e_1, e_2] = \lambda e_2-e_3$, $[e_1, e_3] = e_2+\lambda e_3$ \\
		\hline
	\end{tabular}

	\caption{The $3$-dimensional solvable non-abelian Lie algebras.}
			\label{tab:dim3}
\end{table}

As a start, we give four examples of simply transitive actions of solvable Lie groups on the $3$-dimensional Heisenberg group. Afterwards, we show that these are in fact the only possibilities.

\begin{example}\label{r'_{3,0}}
	
	Consider the Lie algebra $\ggo=\mathfrak r'_{3,0}$, which is also known as the Lie algebra $\mathfrak e(2)$ of the group of rigid motions of Euclidean $2$-space. Using the construction mentioned before, it is easy to check that the semisimple splitting of $\ggo$ is isomorphic to $\ggo'= \RR\ltimes \RR^3  = \langle e_1 \rangle \ltimes \langle e_2, e_3, e_4 \rangle$ with Lie brackets $$[e_1,e_2]=-e_3, \quad [e_1,e_3]=e_2 \quad \text{and} \quad e_4\in\zg(\ggo').$$ 
	The Lie algebra $\ggo'$ forms a subalgebra of $\affg(\hg_3)=\hg_3\rtimes\Der(\hg_3)$, by considering the map 
	$\varphi:\ggo'\to \affg(\hg_3)$, where 
	$$\varphi(x_1,x_2,x_3,x_4)=\left( (x_2,x_3,x_4),\begin{pmatrix}
	0&-x_1&0\\
	x_1&0&0\\
	\frac{x_3}{2}&-\frac{x_2}{2}&0
	\end{pmatrix}\right).$$

	\noindent The nilradical of $\ggo'$ is the abelian ideal $\RR^3 = \langle e_2, e_3, e_4 \rangle$ and the restriction of the map $\varphi$ to the nilradical is given by 
	$\restr{\varphi}{\ngo(\ggo^\prime)}(x_2,x_3,x_4)=\left((x_2,x_3,x_4),\begin{pmatrix}
	0&0&0\\
	0&0&0\\
	\frac{x_3}{2}&-\frac{x_2}{2}&0
	\end{pmatrix} \right).$
By Corollary \ref{cor:q2} the corresponding $1$-connected Lie group acts simply transitive on $H_3$.
\end{example}

\begin{example}\label{r_{3,-1}}
	
	Consider the Lie algebra $\ggo=\mathfrak r_{3,-1}$, which is also known as the Lie algebra $\mathfrak e(1, 1)$ of the group of rigid motions of Minkowski $2$-space. Using the construction mentioned before, it is easy to check that the semisimple splitting of $\ggo$ is isomorphic to $\ggo'=\RR\ltimes \RR^3 =  \langle e_1 \rangle \ltimes \langle e_2, e_3, e_4 \rangle  $ with Lie brackets $$[e_1,e_2]=e_2, \quad [e_1,e_3]=-e_3 \quad \text{and} \quad e_4\in\zg(\ggo').$$
The Lie algebra $\ggo'$ forms a subalgebra of $\affg(\hg_3)=\hg_3\rtimes\Der(\hg_3)$ by considering the map 
	$\varphi:\ggo'\to \affg(\hg_3)$, where 
	$$\varphi(x_1,x_2,x_3,x_4)=\left( (x_2,x_3,x_4),\begin{pmatrix}
	x_1&0&0\\
	0&-x_1&0\\
	\frac{x_3}{2}&-\frac{x_2}{2}&0
	\end{pmatrix}\right).$$
	The nilradical of $\ggo'$ is the abelian ideal $\RR^3 = \langle e_2, e_3, e_4 \rangle$ and the restriction of the map $\varphi$ to the nilradical is given by 
$\restr{\varphi}{\ngo(\ggo^\prime)}(x_2,x_3,x_4)=\left((x_2,x_3,x_4),\begin{pmatrix}
	0&0&0\\
	0&0&0\\
	\frac{x_3}{2}&-\frac{x_2}{2}&0
	\end{pmatrix} \right).$
	By Corollary \ref{cor:q2} the corresponding $1$-connected Lie group acts simply transitive on $H_3$.
\end{example}

\begin{example}\label{r_{3,1}}
	
	Consider the Lie algebra $\ggo=\mathfrak r_{3,1}$. %This is the Lie algebra of the solvable group which acts simply and transitively on the real hyperbolic space. 
Using the construction mentioned before, it is easy to check that the semisimple splitting of $\ggo$ is isomorphic to $\ggo'=\RR\ltimes \RR^3 =  \langle e_1 \rangle \ltimes \langle e_2, e_3, e_4 \rangle $ with Lie brackets $$[e_1,e_2]=0, \quad [e_1,e_2]=e_2 \quad \text{and} \quad [e_1,e_3]=e_3.$$
	It is easy to check that $\ggo'$ forms a subalgebra of $\affg(\hg_3)=\hg_3\rtimes\Der(\hg_3)$ by considering the map 
	$\varphi:\ggo'\to \affg(\hg_3)$, where 
	$$\varphi(x_1,x_2,x_3,x_4)=\left( (x_2,x_3,x_4),\begin{pmatrix}
	0&0&0\\
	0&x_1&0\\
	\frac{x_3}{2}&-\frac{x_2}{2}&x_1
	\end{pmatrix}\right).$$
	The nilradical of $\ggo^\prime$ is the abelian ideal $\RR^3 = \langle e_2, e_3, e_4 \rangle$ and the restriction of the map $\varphi$ to the nilradical is given by 
$\restr{\varphi}{\ngo(\ggo^\prime)}(x_2,x_3,x_4)=\left((x_2,x_3,x_4),\begin{pmatrix}
	0&0&0\\
	0&0&0\\
	\frac{x_3}{2}&-\frac{x_2}{2}&0
	\end{pmatrix} \right).$
By Corollary \ref{cor:q2} the corresponding $1$-connected Lie group acts simply transitive on $H_3$.
\end{example}

\begin{example}\label{r_3}
	
	If $\ggo=\mathfrak r_3$, the semisimple splitting is 
	$\ggo'\approx \RR\ltimes \hgo_3 =   \langle e_1 \rangle \ltimes \langle e_2, e_3, e_4 \rangle $ 
	with action given by the diagonal derivation $\operatorname{diag}(0,1,1)$. This case follows immediately from Corollary \ref{cor:onnilrad}, but for completeness we write down the embedding itself. 
	The Lie brackets on $\ggo^\prime$ are $$[e_1,e_2]=0, \quad [e_1,e_3]=e_3 \quad \text{and} \quad [e_3,e_4]=e_4.$$
	It is easy to check that $\ggo'$ forms a subalgebra of $\affg(\hg_3)=\hg_3\rtimes\Der(\hg_3)$, where $\hg_3$ denotes the $3$-dimensional Heisenberg Lie algebra, by considering the map 
	$\varphi:\ggo'\to \affg(\hg_3)$, where 
	$$\varphi(x_1,x_2,x_3,x_4)=\left( (x_2,x_3,x_4),\begin{pmatrix}
	0&0&0\\
	0&x_1&0\\
	0&0&x_1
	\end{pmatrix}\right).$$
	The nilradical of $\ggo'$ is isomorphic to $\hgo_3 = \langle e_2, e_3, e_4 \rangle$ and the restriction of the map $\varphi$ to the nilradical is given by 
$\restr{\varphi}{\ngo(\ggo^\prime)}(x_2,x_3,x_4)=\left((x_2,x_3,x_4),0 \right).$	
	By Corollary \ref{cor:q2} the corresponding $1$-connected Lie group acts simply transitive on $H_3$. 
\end{example}

We now prove that the Lie algebras in the previous examples are the only possibilities by discarding the other examples. The strategy is to study the derivations of the Lie algebra on which we act and apply Lemma \ref{spec}.

\begin{proposition}
	Let $G$ be a solvable non-nilpotent Lie group acting simply transitively on $H_3$ via affine transformations, then the Lie algebra of $G$ is isomorphic to either $\mathfrak r_3$, $\mathfrak r_{3,\lambda}$ with $\lambda \in \{\pm 1\}$ or $\mathfrak r'_{3,0}$.
\end{proposition}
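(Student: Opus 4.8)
The plan is to use the classification of $3$-dimensional solvable non-nilpotent Lie algebras from Table~\ref{tab:dim3}, namely $\mathfrak r_3$, $\mathfrak r_{3,\lambda}$ and $\mathfrak r'_{3,\lambda}$, and to rule out all cases except those already exhibited in Examples~\ref{r'_{3,0}}--\ref{r_3}. The main tool will be Lemma~\ref{spec}: if $G$ acts simply transitively on $H_3$, then by Corollary~\ref{cor:q2} the semisimple splitting $\ggo^\prime = \ngo \rtimes \tg$ embeds in $\affg(\hg_3)$ with $\tg \subset \Der(\hg_3)$, and for every $S \in \tg$ the eigenvalues (with multiplicities) of $S$ as a derivation of $\hg_3$ coincide with those of $\restr{\ad_S}{\ngo}$. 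Since $\ngo$ is the nilshadow, its dimension is $3$ and the eigenvalues of $\restr{\ad_S}{\ngo}$ are determined by the structure of $\ggo$, whereas the admissible eigenvalues of a derivation of $\hg_3$ are heavily constrained. This mismatch is what will eliminate the bad cases.

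First I would compute the eigenvalues of a generic derivation $D \in \Der(\hg_3)$. Using the canonical basis $\{f_1,f_2,f_3\}$ with $[f_1,f_2]=f_3$, one finds that any $D$ preserves the center $\langle f_3 \rangle$ and that, writing $\lambda_1, \lambda_2$ for the eigenvalues of $D$ on $\hg_3/\langle f_3\rangle$, the eigenvalue on the center is forced to be $\lambda_1 + \lambda_2$. Thus the three eigenvalues of any derivation of $\hg_3$ have the form $\{\lambda_1, \lambda_2, \lambda_1+\lambda_2\}$. This is the decisive constraint. Next I would, for each candidate $\ggo$, compute the semisimple splitting $\ggo^\prime$ (following the construction recalled in Section~\ref{sec:comp}), identify $\ngo$ and the generator $S$ of $\tg$, and read off the eigenvalues of $\restr{\ad_S}{\ngo}$.

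For the family $\mathfrak r_{3,\lambda}$ with $[e_1,e_2]=e_2$, $[e_1,e_3]=\lambda e_3$, the derivation $S$ acts on the abelian nilshadow with eigenvalues $\{1,\lambda,0\}$ (the extra $0$ coming from the central direction in the semisimple splitting); matching these against the form $\{\lambda_1,\lambda_2,\lambda_1+\lambda_2\}$ forces $\lambda \in \{\pm 1\}$, recovering Examples~\ref{r_{3,-1}} and~\ref{r_{3,1}}. For $\mathfrak r'_{3,\lambda}$ with complex eigenvalues $\lambda \pm i$, the same matching forces the real part $\lambda$ to vanish, leaving only $\mathfrak r'_{3,0}$ as in Example~\ref{r'_{3,0}}; the case $\mathfrak r_3$ is covered directly since its nilshadow is $\hg_3$ itself, as in Example~\ref{r_3}. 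I expect the main obstacle to be bookkeeping rather than conceptual: one must carefully track the eigenvalue $0$ contributed by the extra central generator of $\ggo^\prime$ (present precisely when the nilshadow is abelian $\RR^3$ but absent for $\mathfrak r_3$), and verify in each excluded case that no embedding of $\ggo^\prime$ into $\affg(\hg_3)$ can satisfy the eigenvalue constraint simultaneously with $\tg \subset \Der(\hg_3)$ and $t\colon \ngo \to \hg_3$ bijective. Handling the complex eigenvalue case $\mathfrak r'_{3,\lambda}$ requires extending scalars to $\CC$, exactly as in the proof of Lemma~\ref{spec}, to compare eigenvalues cleanly.
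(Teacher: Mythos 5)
Your proposal is correct and follows essentially the same route as the paper: invoke Corollary \ref{cor:q2} to embed the semisimple splitting $\ggo^\prime = \ngo \rtimes \tg$ into $\affg(\hg_3)$ with $\tg \subset \Der(\hg_3)$, compute that any derivation of $\hg_3$ has eigenvalue multiset of the form $\{\lambda_1,\lambda_2,\lambda_1+\lambda_2\}$, and match this via Lemma \ref{spec} against the eigenvalues $\{0,1,\lambda\}$ (resp.\ $\{0,\lambda\pm i\}$) of $\restr{\ad_S}{\ngo}$ to force $\lambda\in\{\pm 1\}$ for $\mathfrak r_{3,\lambda}$ and $\lambda=0$ for $\mathfrak r'_{3,\lambda}$. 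The only cosmetic difference is that the paper reads off the eigenvalue constraint from the explicit matrix form of a derivation of $\hg_3$ rather than from the induced action on the center, which amounts to the same computation.
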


\begin{proof} 
%		Note that we exhibited a positive answer for the following Lie algebras in the examples above, $\mathfrak r_3$, $\mathfrak r_{3,\lambda}$ with $\lambda=-1$ or $\lambda=1$ and $\mathfrak r'_{3,0}$. So we only need to 
	Let $\ggo$ be the Lie algebra of $G$, and we assume that $G$ acts simply transitive on $H_3$. According to Corollary \ref{cor:q2} there exists a Lie algebra morphism $\varphi: \ggo' \to \affg(\hgo_3)$ with $$\varphi=(t,D): \ggo'=\ngo\rtimes\tg \to \hgo_3\rtimes \Der(\hgo_3)$$  such that $\restr{t}{\ngo}: \ngo \to \hgo_3$ is a bijection. For solvable non-nilpotent Lie algebras of dimension $3$ it holds that $\dim(\tg) = 1$.
    
    It follows from Lemma \ref{spec} that we can take $\varphi$ such that for every $S$ spanning $\tg$ we have $\Spec(\restr{\ad_S}{ \ngo}) = \Spec(D)$ with $D$ some derivation of $\hgo_3$. It is known that $D$ has the form 
    $$    D=\begin{pmatrix}
    a&b&0\\
    c&d&0\\
    m&n&a+d
    \end{pmatrix},$$
    see for example \cite{abdo05-1}. Then the eigenvalues of $D $ are $\{a+d, \frac{a+d\pm \sqrt{\Delta}}{2}\}$, where $\Delta=(a+d)^2-4(ad-bc)$.
 
	First we assume $\ggo=\mathfrak r'_{3,\lambda}$. It can be seen that the semisimple splitting $\ggo'\approx \RR\ltimes \RR^3$ where the action of the semisimple element has eigenvalues $\{0, \lambda+i, \lambda-i \}.$ Since these have to be eigenvalues of $D$, the only possibility is $\lambda=0$.
	
	We assume now $\ggo=\mathfrak r_{3,\lambda}$. In this case the semisimple splitting $\ggo'\approx \RR\ltimes\RR^3$ where the action of the semisimple element has eigenvalues $\{0, 1, \lambda\}$. In the same way as above we have that  $\lambda\in \{\pm 1\}$.
\end{proof}

This proposition shows the strength of Lemma \ref{spec}, since any example which is not excluded by it corresponds to a simply transitive action.

\section{Simply transitive NIL-affine actions in dimension $4$}
\label{sec:dim4}

In this section we study the existence of maps from $\ggo\to \aff(\mathfrak h)$ with $\ggo$ solvable and $\mathfrak h$ nilpotent and $\dim(\ggo) = \dim(\hgo) = 4$, letting $G$ act simply and transitively on $H$ by affine transformations.
There are three $4$-dimensional nilpotent Lie algebras, namely, $\mathfrak{rh}_3, \mathfrak n_4$ and $\RR^4$. The latter corresponds to the usual affine case, already studied in \cite{kim86-1}, therefore we focus on the other two cases. 
Note, if $\ggo$ is nilpotent it falls into Theorem \ref{thm:kd} and so it was studied in \cite{bdd07-1}. 
Therefore we focus on pairs $(\ggo,\hgo)$ where $\ggo$ is solvable non-nilpotent and $\hgo$ is either $\mathfrak{rh}_3$ or $\mathfrak{n}_4$.

We introduce in Table \ref{dim 4} some notation of $4$-dimensional solvable Lie algebras with basis $e_1, e_2, e_3, e_4$.
\begin{table}[h]
	\def\arraystretch{1.5}
	\centering
	%	\resizebox{\textwidth}{!}
	\begin{tabular}{l|c}
		$\ggo$ & Lie bracket on basis \\
		%\toprule
		\hline
		$\mathfrak r\mathfrak h_3$ & $[e_1, e_2] = e_3$  \\
		
		\hline
		$\mathfrak n_4$ & $[e_1, e_2] = e_3$, $[e_1, e_3] = e_4$ \\
		
		\hline
		
		$\mathfrak r\mathfrak r_3$ & $[e_1, e_2] = e_2$, $[e_1, e_3] = e_2 +e_3 $  \\
		
		\hline
		
		$\mathfrak r\mathfrak r_{3,\lambda}$ & $[e_1, e_2] = e_2$, $[e_1, e_3] = \lambda e_3$, $|\lambda|\leq 1$\\
		
		\hline
		
		$\mathfrak r\mathfrak r'_{3,\lambda}$ & $[e_1, e_2] = \lambda e_2 - e_3$, $[e_1, e_3] = e_2 + \lambda e_3$, $\lambda\geq 0$ \\
		
		\hline
		
		$\mathfrak r_2\mathfrak r_2$  & $[e_1, e_2] = e_2$, $[e_3, e_4] = e_4 $ \\
		
		\hline
		
		$\mathfrak r'_2$  & $[e_1, e_3] = e_3$, $[e_1, e_4] = e_4$, $[e_2, e_3] = e_4$, $[e_2, e_4] = -e_3  $ \\
		
		\hline
		
		$\mathfrak r_4$  & $[e_1, e_2] = e_2$, $[e_1, e_3] = e_2 + e_3$, $[e_1, e_4] = e_3 + e_4$ \\
		
		\hline
		
		$\mathfrak r_{4,\lambda}$  & $[e_1, e_2] = e_2$, $[e_1, e_3] = \lambda e_3$, $[e_1, e_4] = e_3 + \lambda e_4$ \\
		
		\hline
		
		$\mathfrak r_{4,\mu,\lambda}$ & $[e_1, e_2] = e_2$, $[e_1, e_3] = \mu e_3$, $[e_1, e_4] = \lambda e_4$, $\mu\lambda\neq0$, $-1 < \mu \leq \lambda \leq 1$ or $-1=\mu\leq \lambda<0$ \\
		
		\hline
		
		$\mathfrak r'_{4,\gamma,\delta}$ & $[e_1, e_2] = \gamma e_2$, $[e_1, e_3] = \delta e_3 -e_4$, $[e_1, e_4] = e_3 +\delta e_4$, $\gamma>0$ \\
		
		\hline
		
		$\mathfrak d_4$ & $[e_1, e_2] = e_2$, $[e_1, e_3] = -e_3$, $[e_2, e_3] = e_4$ \\
		
		\hline
		
		$\mathfrak d_{4,\lambda}$ & $[e_1, e_2] = \lambda e_2$, $[e_1, e_3] =(1-\lambda) e_3$, $[e_1, e_4] = e_4$, $[e_2, e_3] = e_4$, $\lambda\geq \frac12$ \\
		
		\hline
		
		$\mathfrak d'_{4,\lambda}$ & $[e_1, e_2] = \lambda e_2-e_3$, $[e_1, e_3] = e_2+\lambda e_3$, $[e_1, e_4] =2\lambda e_4$, $[e_2, e_3] = e_4$, $\lambda\geq 0$ \\
		
		\hline 
		
		$\mathfrak h_{4}$ & $[e_1, e_2] = e_2$, $[e_1, e_3] = e_2+e_3$, $[e_1, e_4] = 2e_4$, $[e_2, e_3] = e_4$\\
		
		\hline
		%\bottomrule
	\end{tabular} 
	\caption{The $4$-dimensional solvable non-abelian Lie algebras }
	\label{dim 4}
\end{table}
We computed the semisimple splitting of every $4$-dimensional solvable non-abelian Lie algebra in the same way as we did in Example \ref{example_dim4}. We summarize the result in Table \ref{dim 4 sss}, where we exhibit the Lie brackets of the semisimple splitting and also the nilshadow is indentified. In every case, the basis for $\ggo'$ is $\{e_1,\dots,e_5\}$ except when $\ggo$ is $\mathfrak{r}_2\mathfrak{r}_2$ or $\mathfrak{r}_2'$ where the semisimple splitting has dimension $6$ and the basis is denoted by $\{e_1,\dots,e_6\}$. Note that to go from Table \ref{dim 4} to Table \ref{dim 4 sss}, we sometimes renamed our basis, similarly as in Example \ref{example_dim4}.

\begin{table}[h]
	\def\arraystretch{1.5}
	\centering
	%	\resizebox{\textwidth}{!}
	\begin{tabular}{l|c|c|c}
		$\ggo$ & Lie brackets of $\ggo'$ &  $\mathfrak n(\ggo')$ \\
		%\toprule
		\hline
		$\mathfrak r\mathfrak h_3$ & $[e_1, e_2] = e_3$  & $\mathfrak r\mathfrak h_3$\\
		
		\hline
		$\mathfrak n_4$ & $[e_1, e_2] = e_3$, $[e_1, e_3] = e_4$   & $\mathfrak n_4$\\
		
		\hline
		
		$\mathfrak r\mathfrak r_3$ & $[e_1, e_3] = e_3$, $[e_1, e_4] = e_4$, $[e_2, e_3] = e_4$  & $\mathfrak r\mathfrak h_3$ \\
		
		\hline
		
		$\mathfrak r\mathfrak r_{3,\lambda}$ & $[e_1, e_3] = e_3$, $[e_1, e_4] = \lambda e_4 $  & $\RR^4$\\
		
		\hline
		
		$\mathfrak r\mathfrak r'_{3,\lambda}$ & $[e_1, e_3] = \lambda e_3 - e_4$, $[e_1, e_4] = e_3 + \lambda e_4 $    & $\RR^4$\\
		
		\hline
		
		$\mathfrak r_2\mathfrak r_2$  & $[e_1, e_4] = e_4$, $[e_2, e_6] = e_6$  & $\RR^4$ \\
		
		\hline
		
		$\mathfrak r'_2$  & $[e_1, e_5] = e_5$, $[e_1, e_6] = e_6$, $[e_2, e_5] = e_6$, $[e_2, e_6] = -e_5$  & $\RR^4$\\
		
		\hline
		
		$\mathfrak r_4$  & $[e_1, e_3] = e_3$, $[e_1, e_4] = e_4$, $[e_1, e_5] = e_5$, $[e_2, e_3] = e_4$, $[e_2, e_4] = e_5$ & $\ngo_4$ \\
		
		\hline
		
		$\mathfrak r_{4,\lambda}$  & $[e_1, e_3] = \lambda e_3$, $[e_1, e_4] = \lambda e_4$, $[e_1, e_5] = e_5$, $[e_2, e_3] = e_4$ & $\mathfrak r\mathfrak h_3$\\
		
		\hline
		
		$\mathfrak r_{4,\mu,\lambda}$ & $[e_1, e_3] = e_3$, $[e_1, e_4] = \mu e_4$, $[e_1, e_5] = \lambda e_5$  & $\RR^4$\\
		
		\hline
		
		$\mathfrak r'_{4,\gamma,\delta}$ & $[e_1, e_3] = \gamma e_3$, $[e_1, e_4] = \delta e_4 -e_5$, $[e_1, e_5] = e_4 +\delta e_5$ & $\RR^4$\\
		
		\hline
		
		$\mathfrak d_4$ & $[e_1, e_2] = e_2$, $[e_1, e_3] = -e_3$, $[e_2, e_3] = e_4$ & $\mathfrak r\mathfrak h_3$\\
		
		\hline
		
		$\mathfrak d_{4,\lambda}$ & $[e_1, e_2] = \lambda e_2$, $[e_1, e_3] =(1-\lambda) e_3$, $[e_1, e_4] = e_4$, $[e_2, e_3] = e_4$ &  $\mathfrak r\mathfrak h_3$\\
		
		\hline
		
		$\mathfrak d'_{4,\lambda}$ & $[e_1, e_2] = \lambda e_2-e_3$, $[e_1, e_3] = e_2+\lambda e_3$, $[e_1, e_4] =2\lambda e_4$, $[e_2, e_3] = e_4$  & $\mathfrak r\mathfrak h_3$\\
		
		\hline 
		
		$\mathfrak h_{4}$ & $[e_1, e_2] = e_2$, $[e_1, e_4] = e_4$, $[e_1, e_5] = 2e_5$, $[e_2, e_3] = e_4$, $[e_2, e_4] = e_5$ & $\mathfrak n_4$\\
		
		\hline
		%\bottomrule
	\end{tabular} 
	\caption{Semisimple splitting and nilshadow of $4$-dimensional solvable Lie algebras }
	\label{dim 4 sss}
\end{table}

\subsection{The case $\hgo=\mathfrak{rh}_3$}

We first consider $\hgo=\mathfrak{rh}_3$ and study the existence of a Lie algebra morphism $\varphi: \ggo' \to \aff (\hgo)$, $\varphi=(t,D)$ satisfying the properties of Corollary \ref{cor:q2}, where $\ggo'$ is the  semisimple splitting associated to a $4$-dimensional solvable Lie algebra $\ggo$.

\begin{proposition}
Let $\hgo$ be the nilpotent Lie algebras $\mathfrak r\mathfrak h_3$ and let $\ggo$ be a solvable Lie algebra with semisimple splitting $\ggo'$ and nilshadow $\mathfrak n(\ggo')=\RR^4$ as in Table \ref{dim 4 sss}. 
	There exists a simply transitive NIL-affine action $\varphi: \ggo \to \aff(\hgo)$ if and only if $\ggo$ is isomorphic to either 
	$\mathfrak{rr}_{3,\lambda}$ with $\lambda \in \left\{0, \pm 1 \right\}$, $\mathfrak{r'}_{4,\gamma,\delta}$ with either $\delta=0$ or $\gamma=2\delta$, $\mathfrak{rr'}_{3,0}$,   
	$\mathfrak{r}_2\mathfrak{r}_2$,
	$\mathfrak{r'}_2$ or 
	$\mathfrak{r}_{4,\mu,\lambda}$ with the following possibilities for the parameters $(\mu,\lambda)$: $(\mu,\lambda)=(-1,\lambda)$ with $-1 \leq \lambda < 0$, $(\mu,\mu)$ with $-1<\mu\leq 1$ and $\mu \neq 0$, $(\mu,-\mu)$ with $-1 < \mu < 0$, $(\mu,1)$ with $-1<\mu<1$ and $\mu \neq 0$, $(\mu,1-\mu)$ with $0<\mu< \frac12$ or $(\mu,1+\mu)$ with $-1<\mu<0$ and $\mu \neq -\frac12$.
	\end{proposition}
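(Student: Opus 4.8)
The plan is to invoke Corollary \ref{cor:q2}. For every algebra $\ggo$ in the relevant rows of Table \ref{dim 4 sss} the nilshadow is $\ngo = \RR^4$, so a simply transitive action exists if and only if the semisimple splitting $\ggo' = \RR^4 \rtimes \tg$ embeds in $\aff(\mathfrak{rh}_3)$ with $\RR^4$ landing in the nilpotent elements through a linear bijection $t \colon \RR^4 \to \mathfrak{rh}_3$ and with $\tg \subset \Der(\mathfrak{rh}_3)$ semisimple. The ``only if'' direction is a ruling-out argument resting entirely on Lemma \ref{spec}, whereas the ``if'' direction requires exhibiting an explicit embedding for each surviving parameter value.

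The decisive preliminary step is to determine the possible spectra of derivations of $\hgo = \mathfrak{rh}_3 = \langle f_1, f_2, f_3, f_4 \rangle$, where $[f_1,f_2]=f_3$ and $f_4$ is a second central generator. Since a derivation preserves both the commutator $\langle f_3\rangle$ and the centre $\langle f_3, f_4\rangle$, and since the Leibniz rule applied to $f_3=[f_1,f_2]$ forces the eigenvalue on $f_3$ to be the sum of the two eigenvalues on $\langle f_1,f_2\rangle$, I would show that, in the block decomposition along $\langle f_1,f_2\rangle \oplus \langle f_3,f_4\rangle$, every derivation is block lower triangular with spectrum of the form $\{\alpha,\beta,\alpha+\beta,\tau\}$. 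Here $\alpha,\beta$ are the eigenvalues of the $2\times2$ block on $\langle f_1,f_2\rangle$ and $\tau$ is the free eigenvalue on the extra central line $f_4$. This one constraint --- some eigenvalue equals the sum of two others, with the fourth arbitrary --- is exactly what excludes the forbidden parameters.

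For necessity I would run through the rows with $\mathfrak n(\ggo')=\RR^4$. Given a simply transitive action, Lemma \ref{spec} yields $\Spec(\restr{\ad_S}{\ngo}) = \Spec(S)$ for every $S$ in $\tg$, and the right-hand side must be admissible, i.e.\ of the form $\{\alpha,\beta,\alpha+\beta,\tau\}$. Reading the spectra off Table \ref{dim 4 sss} gives $\{0,0,1,\lambda\}$ for $\mathfrak{rr}_{3,\lambda}$, $\{0,0,\lambda+i,\lambda-i\}$ for $\mathfrak{rr'}_{3,\lambda}$, $\{0,1,\mu,\lambda\}$ for $\mathfrak{r}_{4,\mu,\lambda}$ and $\{0,\gamma,\delta+i,\delta-i\}$ for $\mathfrak{r'}_{4,\gamma,\delta}$. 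In the complex cases a real derivation can only house the conjugate pair in the slot $\{\alpha,\beta\}$, so $\alpha+\beta$ equals $2\lambda$ (resp.\ $2\delta$) and the two remaining real eigenvalues must absorb it, giving $\lambda=0$ (resp.\ $\delta=0$ or $\gamma=2\delta$). For $\mathfrak{r}_{4,\mu,\lambda}$ I would enumerate which of $0,1,\mu,\lambda$ plays the role of $\tau$ and which sum-relation then holds among the remaining three; intersecting the resulting equations with the normal-form inequalities on $(\mu,\lambda)$ and discarding coincidences with other families (for instance $\mu=-\tfrac12$, where $(\mu,1+\mu)$ meets $(\mu,-\mu)$) reproduces exactly the six listed families. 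For the two six-dimensional splittings $\mathfrak{r}_2\mathfrak{r}_2$ and $\mathfrak{r'}_2$ the space $\tg$ is two-dimensional, but the two semisimple generators act on $\ngo$ with spectra $\{0,0,0,1\}$, $\{0,0,0,1\}$ and $\{0,0,1,1\}$, $\{0,0,i,-i\}$ respectively, all admissible; as these carry no parameters, nothing is excluded and they survive.

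For sufficiency I would, for each surviving algebra, write down an explicit morphism $\varphi=(t,D)\colon \ggo'=\RR^4\rtimes\tg \to \mathfrak{rh}_3\rtimes\Der(\mathfrak{rh}_3)$ in the spirit of the constructions in Section \ref{sec:dim3}: take $t$ to be the coordinate identification $\RR^4 \to \mathfrak{rh}_3$, send the semisimple generators of $\tg$ to commuting diagonalisable derivations realizing the prescribed spectrum, and add suitable strictly lower triangular (hence nilpotent) derivation parts on $\ngo$ so that the defining relation $t([X,Y])=[t(X),t(Y)]+D(X)t(Y)-D(Y)t(X)$ holds; then $\restr{t}{\ngo}$ is a bijection and Corollary \ref{cor:q2} applies. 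The existence of such a nilpotent structure of $\RR^4$ on $\mathfrak{rh}_3$ is guaranteed in principle (dimension at most $5$), so the real work --- and the main obstacle --- is to produce, case by case, a semisimple part with the correct spectrum that is compatible with a fixed nilpotent structure, checking that all mixed brackets close; for $\mathfrak{r}_2\mathfrak{r}_2$ and $\mathfrak{r'}_2$ this additionally requires two commuting semisimple derivations with the prescribed eigenlines. The necessity half, by contrast, is essentially mechanical once the spectral form of $\Der(\mathfrak{rh}_3)$ is established.
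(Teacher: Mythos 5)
Your strategy is the paper's: Corollary \ref{cor:q2}, the spectral form $\{\alpha,\beta,\alpha+\beta,\tau\}$ for derivations of $\mathfrak{rh}_3$, Lemma \ref{spec} for exclusion, and explicit embeddings for the surviving cases; for all the families with $\dim(\tg)=1$ your eigenvalue analysis is correct and agrees with the paper's proof.

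The genuine gap is your handling of $\mathfrak r_2\mathfrak r_2$ and $\mathfrak r'_2$, the two algebras whose torus part is two-dimensional. You conclude that they ``survive'' because the spectra of the two generators of $\tg$ are each individually of the admissible form, and you defer the existence of the required pair of commuting semisimple derivations to the construction step without ever performing it. That existence fails. For $\mathfrak r_2\mathfrak r_2$, the only way to realize the multiset $\{0,0,0,1\}$ as $\{\alpha,\beta,\alpha+\beta,\tau\}$ with a semisimple derivation is $\alpha=\beta=0$ (forcing the upper-left $2\times 2$ block to vanish) and $\tau=1$; hence the sum of the images of $e_1$ and $e_2$ has spectrum $\{0,0,0,2\}$, whereas Lemma \ref{spec} applied to $e_1+e_2\in\tg$ demands $\{0,0,1,1\}$. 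For $\mathfrak r'_2$ the image of $e_1$ is forced to have a $2\times 2$ block with the distinct real eigenvalues $0,1$, and no real matrix with eigenvalues $\pm i$ commutes with such a block. This is exactly the ``computation'' the paper invokes to discard both algebras, consistently with Table \ref{table:overview}, which marks both with a cross for $\hgo_3\oplus\RR$. (Be aware that the printed statement of the proposition does list these two algebras among the positive cases; that contradicts the paper's own proof and its overview table, and your argument reproduces the statement's error rather than detecting it.) Separately, your sufficiency direction is only a plan: the explicit morphisms the paper records in Tables \ref{Nilshadow-R4} and \ref{Nilshadow-R4_continuation} still have to be produced and verified case by case, and for $\mathfrak r_{4,\mu,\lambda}$ the enumeration of admissible parameter pairs is asserted rather than carried out.
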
 

\begin{proof}
 According to Corollary \ref{cor:q2} there exists a Lie algebra morphism $\varphi: \ggo' \to \aff(\mathfrak{rh}_3)$ with $$\varphi=(t,D): \ggo'=\ngo\rtimes\tg \to \mathfrak{rh}_3\rtimes \Der(\mathfrak{rh}_3)$$ such that $t: \ngo \to \mathfrak{rh}_3$ is a bijection. For most possibilities $\tg$ is spanned by one element which we write as $e_1$, otherwise we write a basis as $e_1, e_2$. It follows from Lemma \ref{spec} that we can take $\varphi$ such that $\Spec(\restr{\ad_{e_1}}{ \ngo}) = \Spec(D)$, where $D$ is a derivation of $\mathfrak{rh}_3$. It can be shown that $D$ has the form 
	$$    D=\begin{pmatrix}
	a&b&0&0\\
	c&d&0&0\\
	m&n&a+d&u\\
	p&q&0&v
	\end{pmatrix}.$$
	Then, eigenvalues of $D$ are $\{a+d, \frac{a+d\pm \sqrt{\Delta}}{2},v\}$, where $\Delta=(a+d)^2-4(ad-bc)$. We check when the eigenvalues of $\restr{\ad_{e_1}}{ \ngo}$ coincide with the eigenvalues of $D$ for any Lie algebra in Table \ref{dim 4} whose the nilshadow is $\RR^4$. From now on, we will just write $\ad_{e_1}$ when actually referring to the restriction to $\ngo$.
		
	If $\ggo=\mathfrak{rr}_{3,\lambda}$, the eigenvalues of $\ad_{e_1}$ are $\{0,0,1,\lambda\}$. Comparing this set with the eigenvalues of $D$ we have that $\lambda \in \{ 0, \pm1\}$.

	If $\ggo=\mathfrak{r'}_{4,\gamma,\delta}$, the eigenvalues of $\ad_{e_1}$ are $\{0,\gamma,\delta\pm i\}$. Comparing this set with the eigenvalues of $D$ we obtain two possibilities, namely $\delta = 0$ or $\gamma=2\delta$.
	
	If $\ggo=\mathfrak{rr'}_{3,\lambda}$, the eigenvalues of $\ad_{e_1}$ are $\{0,0,\lambda\pm i\}$. Comparing this set with the eigenvalues of $D$ we have that $\lambda=0$.
	
	If $\ggo=\mathfrak{r}_2\mathfrak{r}_2$, then $\ggo'=\RR^2\ltimes\RR^4$ with the actions given by $\ad_{e_1}=\text{diag}(0,1,0,0)$ and 
	$\ad_{e_2}=\text{diag}(0,0,0,1)$. A computation shows that there cannot be two different commuting derivations with these eigenvalues on $\hgo$.	
%	
%	\Marcos{I would not write the proof. It is an easy computation as follows}
%	
%	Take a basis $e_1, \ldots e_4$ such that the derivations have the diagonal form as above. Note that $e_2$ and $e_4$ are in the center. Now $[e_1,e_3] = 0$ because they span a $2$-dimensional subalgebra. We conclude that the Lie algebra is abelian. 

	If $\ggo=\mathfrak{r}'_2$, then $\ggo'=\RR^2\ltimes\RR^4$ with the actions given by $\ad_{e_1}=\text{diag}(0,0,1,1)$ and 
	$\ad_{e_2}=\begin{pmatrix}
	0 & 0 & 0 & 0 \\
	0 & 0 & 0 & 0 \\
	0 & 0 & 0 & -1 \\
	0 & 0 & 1 & 0 \\
	\end{pmatrix}$.	Again, a computation shows that there do not exist two different commuting derivations with these eigenvalues on $\hgo$. 	
	
%		\Marcos{I would not write the proof. It is an easy computation as follows}
%	
%	Take a basis $e_1, \ldots e_4$ such that the derivations have the diagonal form as above.  Note that $[e_1,e_2] = [e_3,e_4] = 0$ as before. Now by looking at the eigenvalues over $\CC$, it follows that also $[e_i,e_j] = 0$ for $i \in \{1,2\}, j \in \{3,4\}$. 
	
	If $\ggo=\mathfrak{r}_{4,\mu,\lambda}$, the eigenvalues of $\ad_S$ are $\{0,1,\mu,\lambda\}$. In this case there are many possible combinations of $\mu$ and $\lambda$, namely the pairs $(-1,\lambda)$ with $-1 \leq \lambda<0$, $(\mu,\mu)$ with $-1<\mu\leq 1$ and $\mu \neq 0$, $(\mu,- \mu)$ with $-1 < \mu < 0$, $(\mu,1)$ with $-1<\mu<1$ and $\mu \neq 0$, $(\mu,1-\mu)$ with $0<\mu<\frac12$ or $(\mu,1+\mu)$ with $-1<\mu<0$ and $\mu \neq -\frac12$.
	
	We exhibit morphisms $D: \ggo' \to \aff (\hgo)$ for the possible cases in Table \ref{Nilshadow-R4} and Table \ref{Nilshadow-R4_continuation}, such that combined with the map $t: \ggo^\prime \to \hgo$ given by $t(x_0,x_1,x_2,x_3,x_4) = (x_1,x_2,x_3,x_4)$ these satisfy the properties of Theorem \ref{thm:q2}. We leave it to the reader to check that these are indeed Lie algebra morphisms. 
\end{proof}

We continue with the case $\hgo=\mathfrak r\mathfrak h_3$, but now we consider Lie algebras in Table \ref{dim 4 sss} whose the nilshadow is $\mathfrak r\mathfrak h_3$. By Corollary \ref{cor:onnilrad} there is a trivial action in these cases, in Table \ref{Nilshadow-h3R} we show the semisimple part for these examples.

\

The last part of the subsection $\hgo=\mathfrak r\mathfrak h_3$ consists of Lie algebras in Table \ref{dim 4 sss} whose nilshadow is equal to $\mathfrak n_4$, namely $\mathfrak{r}_{4}$ and $\mathfrak{h}_{4}$. In both cases a morphism exists, of which we present an example in Table \ref{Nilshadow-n4}.

\subsection{The case $\hgo=\mathfrak n_4$}
We assume from now on $\hgo=\mathfrak n_4$, the filiform $4$-dimensional Lie algebra with Lie brackets given by $[e_1,e_2]=e_3, [e_1,e_3]=e_4$ in the basis $\{e_1,e_2,e_3,e_4\}$. A direct computations shows that a general derivation of $\hgo$ can be writen  as
$$D=\begin{pmatrix}
a & 0 & 0 & 0 \\
b & e & 0 & 0 \\
c & f & a+e & 0 \\
d & g & f & 2a+e \\
\end{pmatrix},$$
and thus has eigenvalues $\{a,e,a+e,2a+e\}$. Note that no derivation of $\hgo$ has eigenvalue $0$ with exactly multiplicity $2$ or $3$.

\begin{proposition}
	Let $\hgo$ be the nilpotent Lie algebras $\mathfrak n_4$ and let $\ggo$ be a solvable Lie algebra with semisimple splitting $\ggo'$ and nilshadow $\mathfrak n(\ggo')=\RR^4$ as in Table \ref{dim 4 sss}. 
There exists a simply transitive action $\varphi: \ggo \to \aff(\hgo)$ if and only if $\ggo$ is isomorphic to 
	$\mathfrak{r}_{4,\mu,\lambda}$ with $\mu=\lambda\in\{-1,\frac12,1\}$ or $\mu=-\frac12$ and $\lambda=\frac12$.
	%	$\mathfrak{r}_{4}$ or
	%	$\mathfrak{h}_{4}$.
\end{proposition}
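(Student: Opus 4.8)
The plan is to run the same machinery as in the proof for $\hgo=\mathfrak{rh}_3$, combining Corollary \ref{cor:q2} with Lemma \ref{spec}. By Corollary \ref{cor:q2}, the group $G$ acts simply transitively on $H$ if and only if the semisimple splitting $\ggo'=\ngo\rtimes\tg$, with $\ngo=\RR^4$ the nilshadow, embeds in $\aff(\mathfrak{n}_4)$ so that $\ngo$ maps to commuting nilpotent elements via a bijection $t\colon\ngo\to\mathfrak{n}_4$ and $\tg\subset\Der(\mathfrak{n}_4)$ consists of semisimple derivations. Lemma \ref{spec} then forces, for a generator $S$ of $\tg$, that $\restr{\ad_S}{\ngo}$ and the corresponding derivation $D\in\Der(\mathfrak{n}_4)$ share their eigenvalues with multiplicities. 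Since every derivation of $\mathfrak{n}_4$ has real eigenvalue multiset $\{a,e,a+e,2a+e\}$, in which $0$ occurs with multiplicity $0$, $1$ or $4$ but never $2$ or $3$, this spectral condition is the main tool for discarding candidates.

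First I would dispose of five of the six Lie algebras in Table \ref{dim 4 sss} with nilshadow $\RR^4$. For $\mathfrak{rr}'_{3,\lambda}$ and $\mathfrak{r}'_{4,\gamma,\delta}$, the operator $\restr{\ad_{e_1}}{\ngo}$ has the non-real eigenvalues $\lambda\pm i$, respectively $\delta\pm i$, which cannot match the real spectrum of a derivation of $\mathfrak{n}_4$. For $\mathfrak{rr}_{3,\lambda}$, $\mathfrak{r}'_2$ and $\mathfrak{r}_2\mathfrak{r}_2$ the respective spectra $\{0,0,1,\lambda\}$, $\{0,0,1,1\}$ and $\{0,0,0,1\}$ contain the eigenvalue $0$ with multiplicity $2$ or $3$, impossible by the note preceding the statement. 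Hence only $\ggo=\mathfrak{r}_{4,\mu,\lambda}$, with $\restr{\ad_{e_1}}{\ngo}$ having spectrum $\{0,1,\mu,\lambda\}$, can survive.

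The heart of the argument is the parameter analysis for $\mathfrak{r}_{4,\mu,\lambda}$. Since $\mu\lambda\neq 0$, the eigenvalue $0$ has multiplicity exactly one, so exactly one of $a$, $e$, $a+e$, $2a+e$ vanishes, and I would split into these four cases. Writing the spectrum as the single value $a$ together with the arithmetic progression $e$, $a+e$, $2a+e$ of common difference $a$, I match the remaining multiset $\{1,\mu,\lambda\}$ against the three diagonal entries and impose the admissible ranges $-1<\mu\leq\lambda\leq 1$ or $-1=\mu\leq\lambda<0$. The case $a=0$ yields $\mu=\lambda=1$; the case $e=0$ yields $\mu=\lambda=\tfrac12$ (forcing $2a=1$); the case $a+e=0$ yields $\mu=\lambda=-1$; and the case $2a+e=0$ yields $(\mu,\lambda)=(-\tfrac12,\tfrac12)$. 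Every other matching produces an eigenvalue outside the allowed range and is discarded, which proves that the listed algebras are the only candidates and establishes necessity.

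For sufficiency I would exhibit, for each of the four surviving pairs $(\mu,\lambda)$, an explicit Lie algebra morphism $\varphi=(t,D)\colon\ggo'\to\aff(\mathfrak{n}_4)$ with $t$ a bijection, $D(e_1)$ the semisimple derivation realizing the required spectrum, and $\varphi(e_2),\dots,\varphi(e_5)$ commuting nilpotent elements, collected in a table in the same spirit as for $\mathfrak{rh}_3$; Corollary \ref{cor:q2} then yields the simply transitive action. I expect the genuine obstacle to lie here rather than in the spectral bookkeeping: the matching of Lemma \ref{spec} is only necessary, and one must actually construct a simply transitive action of the abelian group $\RR^4$ on the non-abelian filiform group $H$ whose nilpotent part is compatible with a semisimple derivation of the prescribed eigenvalues. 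Since the torus is one-dimensional here, no commuting-pair obstruction as for $\mathfrak{r}_2\mathfrak{r}_2$ and $\mathfrak{r}'_2$ arises, so I expect each spectrally admissible case to be realizable by a direct construction.
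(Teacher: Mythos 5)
Your proposal is correct and follows essentially the same route as the paper: Corollary \ref{cor:q2} combined with Lemma \ref{spec}, the lower-triangular form of derivations of $\mathfrak n_4$ giving the real spectrum $\{a,e,a+e,2a+e\}$ (in which $0$ never has multiplicity $2$ or $3$), and a matching of eigenvalue multisets to discard all candidates except the four listed pairs $(\mu,\lambda)$; your four-case analysis of which diagonal entry vanishes is in fact more explicit than the paper's ``a small computation shows.'' The only part you leave unwritten is the existence half, which the paper settles exactly as you propose, by exhibiting the explicit morphisms in Table \ref{Nilshadow-R4_continuation}.
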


\begin{proof}
Assume first that there exists a simply transitive action, then according to Corollary \ref{cor:q2} there exists a Lie algebra morphism $\varphi: \ggo' \to \affg(\hgo)$ with $$\varphi=(t,D): \ggo'=\ngo\rtimes\tg \to \hgo\rtimes \Der(\hgo)$$ such that $t: \ngo \to \hgo$ is a bijection. It follows from Lemma \ref{spec} that we can take $\varphi$ such that $\Spec(\restr{\ad_{e_1}}{ \ngo}) = \Spec(D)$, where $e_1$ is an element of $\tg$ and $D$ is a derivation of $\hgo$. 
%	It can be shown that $D$ has the form 
%	$$D=\begin{pmatrix}
%	a & 0 & 0 & 0 \\
%	b & e & 0 & 0 \\
%	c & f & a+e & 0 \\
%	d & g & f & 2a+e \\
%	\end{pmatrix}.$$
	Therefore the eigenvalues of $D$ are of the form $\{a,e,a+e,2a+e\}$.

	We check when the eigenvalues of $\ad_{e_1}$ are equal to the eigenvalues of $D$ for any Lie algebra in Table \ref{dim 4} whose the nilshadow is $\RR^4$. A small computation shows that the only option is $\ggo=\mathfrak{r}_{4,\mu,\lambda}$ with $\mu=\lambda\in\{-1,\frac12,1\}$ or $\mu=-\frac12$ and $\lambda=\frac12$.	We exhibit an example of a map $\varphi: \ggo' \to \aff (\hgo)$ for these possible cases in Table \ref{Nilshadow-R4_continuation}.
\end{proof}

\

We continue with the case $\hgo=\mathfrak n_4$, but now we consider Lie algebras in Table \ref{dim 4} whose the nilshadow is $\mathfrak r\mathfrak h_3$.

\begin{proposition}
	Let $\hgo$ be the nilpotent Lie algebras $\mathfrak n_4$ and let $\ggo$ be a solvable Lie algebra with semisimple splitting $\ggo'$ and nilshadow $\mathfrak n(\ggo')=\mathfrak r\mathfrak h_3$ as in Table \ref{dim 4 sss}. 
	There exists a simply transitive action $\varphi: \ggo \to \aff(\hgo)$ if and only if $\ggo$ is isomorphic to 
	$\mathfrak{r}_{4,\lambda}$ with $\lambda \in \left\{-1,\frac 12, 1\right\}$, or
	$\mathfrak{d}_{4,\lambda}$ with $\lambda \in \left\{\frac 12, 2\right\}$.
	%	$\mathfrak{r}_{4}$ or
	%	$\mathfrak{h}_{4}$.
\end{proposition}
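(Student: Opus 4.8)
The plan is to invoke Corollary \ref{cor:q2}, which reduces the existence of a simply transitive action $\varphi: \ggo \to \aff(\mathfrak n_4)$ to the existence of an embedding of the semisimple splitting $\ggo' = \ngo \rtimes \tg$ into $\aff(\mathfrak n_4)$ with $\ngo = \mathfrak r\mathfrak h_3$ mapped onto nilpotent elements, $\tg \subset \Der(\mathfrak n_4)$ consisting of semisimple elements, and the projection $t: \ngo \to \mathfrak n_4$ a linear bijection. In every case of Table \ref{dim 4 sss} with nilshadow $\mathfrak r\mathfrak h_3$ we have $\dim \tg = 1$, so write $e_1$ for a generator of $\tg$. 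By Lemma \ref{spec} a necessary condition for such an embedding is that $\Spec(\restr{\ad_{e_1}}{\ngo})$ agrees, with multiplicities, with the spectrum of some derivation $D$ of $\mathfrak n_4$. Since every derivation of $\mathfrak n_4$ is lower triangular with diagonal $(a,e,a+e,2a+e)$, its spectrum is the \emph{real} multiset $\{a,e,a+e,2a+e\}$ in which $0$ never occurs with multiplicity exactly $2$ or $3$; these two features drive the entire argument.

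For the necessity direction I would read $\Spec(\restr{\ad_{e_1}}{\ngo})$ off Table \ref{dim 4 sss} and test each candidate. The splitting of $\mathfrak r\mathfrak r_3$ gives $\{0,0,1,1\}$ and that of $\mathfrak d_4$ gives $\{1,-1,0,0\}$, both carrying $0$ with multiplicity $2$, so both are excluded at once. The splitting of $\mathfrak d'_{4,\lambda}$ gives $\{0,\lambda+i,\lambda-i,2\lambda\}$, whose non-real entries $\lambda \pm i$ can never match a real derivation spectrum, ruling out $\mathfrak d'_{4,\lambda}$ for every $\lambda$. The remaining work is to solve the multiset equations $\{0,\lambda,\lambda,1\} = \{a,e,a+e,2a+e\}$ for $\mathfrak r_{4,\lambda}$ and $\{0,\lambda,1-\lambda,1\} = \{a,e,a+e,2a+e\}$ for $\mathfrak d_{4,\lambda}$. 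A short case distinction on which of $a,e,a+e,2a+e$ vanishes yields $\lambda \in \{-1,\tfrac12,1\}$ in the first case and, after imposing the normalization $\lambda \geq \tfrac12$ built into $\mathfrak d_{4,\lambda}$, $\lambda \in \{\tfrac12,2\}$ in the second.

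For the sufficiency direction I would, for each surviving pair, exhibit an explicit morphism $\varphi = (t,D): \ggo' \to \aff(\mathfrak n_4)$ and collect them in a table, exactly as was done for the $\RR^4$ nilshadow. Concretely one fixes the linear bijection $t: \ngo \to \mathfrak n_4$ identifying the underlying vector spaces, sets $t|_{\tg} = 0$, and chooses $D(e_1) \in \Der(\mathfrak n_4)$ realizing the matched eigenvalues, then checks that all bracket relations of $\ggo'$ are preserved and that $D(\ngo)$ consists of nilpotent derivations, so that $\mathfrak r\mathfrak h_3$ acts simply transitively on $\mathfrak n_4$ by Theorem \ref{thm:kd}. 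The routine verification that each tabulated pair $(t,D)$ is a genuine Lie algebra morphism can be left to the reader.

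The main obstacle is precisely this last step: Lemma \ref{spec} supplies only a necessary spectral condition, so matching eigenvalues does not by itself produce a simply transitive action. One must actually construct a compatible pair $(t,D)$, namely a simply transitive action of the nilshadow $\mathfrak r\mathfrak h_3$ on $\mathfrak n_4$ together with a semisimple derivation of $\mathfrak n_4$ extending it; although the nilpotent-on-nilpotent action of $\mathfrak r\mathfrak h_3$ on $\mathfrak n_4$ exists abstractly by the dimension $\leq 5$ results of \cite{bdd07-1}, it must be arranged so that the required semisimple part fits. Establishing this compatibility, rather than the eigenvalue bookkeeping, is where the genuine content lies.
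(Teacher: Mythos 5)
Your proposal is correct and follows essentially the same route as the paper: necessity via Corollary \ref{cor:q2} and the spectral matching of Lemma \ref{spec} against the derivation spectrum $\{a,e,a+e,2a+e\}$ of $\mathfrak n_4$ (the paper leaves the case analysis as "it can be proved", which you carry out explicitly and correctly), and sufficiency via the explicit morphisms collected in Table \ref{Nilshadow-h3R}. Your closing remark that the eigenvalue condition is only necessary and that the real content is exhibiting a compatible pair $(t,D)$ matches exactly how the paper closes the argument.
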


\begin{proof}
	Assume first that there exists a simply transitive action, so there exists a map $\varphi: \ggo^\prime \to \aff(\hgo)$ of the semisimple splitting satisfying the conditions of Corollary \ref{cor:q2}. It follows from Lemma \ref{spec} that we can take $\varphi$ such that $\Spec(\restr{\ad_{e_1}}{ \ngo}) = \Spec(D)$, where $e_1$ is an element of $\tg$ and $D$ is a derivation of $\hgo_3$. 
%	It can be shown that $D$ has the form 
%	$$D=\begin{pmatrix}
%	a & 0 & 0 & 0 \\
%	b & e & 0 & 0 \\
%	c & f & a+e & 0 \\
%	d & g & f & 2a+e \\
%	\end{pmatrix}.$$
	Therefore the eigenvalues of $D$ are of the form $\{a,e,a+e,2a+e\}$.

	We check when the eigenvalues of $\ad_{e_1}$ are equal to the eigenvalues of $D$ for any Lie algebra in Table \ref{dim 4} whose the nilshadow is $\mathfrak{rh}_3$. It can be proved that the only options are
	$\mathfrak{r}_{4,\lambda}$ with $\lambda \in \left\{-1,\frac 12, 1\right\}$, or
	$\mathfrak{d}_{4,\lambda}$ with $\lambda \in \left\{\frac 12, 2\right\}$.
	We exhibit an example of a map $\varphi: \ggo' \to \aff (\hgo)$ for the possible cases, see Table \ref{Nilshadow-h3R}.
\end{proof}

The last part of the case $\hgo=\mathfrak n_4$ consists of Lie algebras in Table \ref{dim 4} whose the nilshadow is $\mathfrak n_4$. In this case there is a trivial Lie algebra morphism by Corollary \ref{cor:onnilrad}. For completeness we give the compatible derivation $D_X$ for $X\in\tg$ in Table \ref{Nilshadow-n4}.

We gave existence results of simply transitive actions from $\ggo$ to $\hgo$ for all possible $4$-dimensional Lie algebras $\ggo$ and $\hgo$, where $\ggo$ is solvable and $\hgo$ is nilpotent. An overview is given in Table \ref{table:overview}, where we list all possibilities. From the explicit morphisms from $\ggo'$ to $\hgo$ exhibited in Tables \ref{Nilshadow-R4}, \ref{Nilshadow-R4_continuation}, \ref{Nilshadow-h3R} and \ref{Nilshadow-n4} we can obtain morphisms from $\ggo$ to $\hgo$. 
As an example, let us recall notation from Example \ref{example_dim4}. Consider the Lie algebra $\ggo=\mathfrak r\mathfrak r_3$ with basis $e_1, e_2, e_3, e_4$ and structure constants given by $[e_1, e_2] = e_2$, $[e_1, e_3] = e_2 +e_3 $ in a basis $\{e_1,e_2,e_3,e_4\}.$ From Table \ref{Nilshadow-h3R} we have a morphism $\varphi: \ggo' \to \aff(\hgo_3\oplus\RR)$ given by 
$\varphi(x_0,x_1,x_2,x_3,x_4)=( (x_1,x_2,x_3,x_4), \text{diag}(0,x_0,x_0,0) )$ in the basis $f_1, f_2, f_3, f_4$ of $\mathfrak r\mathfrak r_3$ with constant structures given by $[f_1,f_2]=f_3$. Then, by composing with the inclusion of $\ggo$ into its semisimple splitting $\ggo^\prime$, we get a Lie algebra morphism 
$\psi: \mathfrak r\mathfrak r_3 \to \aff(\hgo_3\oplus\RR)$ given by 
$$\psi(x_1,x_2,x_3,x_4)=\left( (x_1,x_3,x_2,x_4), \text{diag}(0,x_1,x_1,0) \right)$$ which induces a simply transitive action between the corresponding $1$-connected Lie groups.
        
\

	\begin{table}[h!]
	\def\arraystretch{1.5}
	\centering
	%	\resizebox{\textwidth}{!}
	\begin{tabular}{l|c|c}
		$\ggo$ & $\Der(\hgo_3\oplus\RR)$ & $\Der(\ngo_4)$ \\
		%\toprule
		\hline
		
		$\mathfrak r\mathfrak r_{3,-1}$  &  $\begin{pmatrix}
	x_0 & 0 & 0 & 0 \\
	0 & -x_0 & 0 & 0 \\
	\frac{x_2}{2} & -\frac{x_1}{2} & 0 & 0 \\
	0 & 0 & 0 & 0 \\
	\end{pmatrix}$ & \texttimes \\
		
		\hline
		
		$\mathfrak r\mathfrak r_{3,1}$  & $\begin{pmatrix}
	0 & 0 & 0 & 0 \\
	0 & x_0 & 0 & 0 \\
	\frac{x_2}{2} & -\frac{x_1}{2} & x_0 & 0 \\
	0 & 0 & 0 & 0 \\
	\end{pmatrix}$ & \texttimes \\
		
		\hline
		
		$\mathfrak r\mathfrak r_{3,0}$  &  $\begin{pmatrix}
	0 & 0 & 0 & 0 \\
	0 & 0 & 0 & 0 \\
	\frac{x_2}{2} & -\frac{x_1}{2} & 0 & 0 \\
	0 & 0 & 0 & x_0 \\
	\end{pmatrix}$ & \texttimes \\
		
		\hline

		$\mathfrak r'_{4,\gamma,0}$ &  $\begin{pmatrix}
	0 & x_0 & 0 & 0 \\
	-x_0 & 0 & 0 & 0 \\
	\frac{x_2}{2} & -\frac{x_1}{2} & 0 & 0 \\
	0 & 0 & 0 & \gamma x_0 \\
	\end{pmatrix}$ & \texttimes \\
		
		\hline
		
		$\mathfrak r'_{4,2\delta,\delta}$ &  $\begin{pmatrix}
	\delta x_0 & x_0 & 0 & 0 \\
-x_0 & \delta x_0 & 0 & 0 \\
	\frac{x_2}{2} & -\frac{x_1}{2} & 2 \delta x_0 & 0 \\
	0 & 0 & 0 & 0 \\
	\end{pmatrix}$ & \texttimes \\
		
		\hline
				$\mathfrak r\mathfrak r'_{3,0}$ &  $\begin{pmatrix}
		0 & x_0 & 0 & 0 \\
		-x_0 & 0 & 0 & 0 \\
		\frac{x_2}{2} & -\frac{x_1}{2} & 0 & 0 \\
		0 & 0 & 0 & 0 \\
		\end{pmatrix}$ & \texttimes\\
		
		\hline 
		
		\bottomrule
	\end{tabular} 
	\caption{Simply transitive actions for $\ggo$ with nilshadow $\mathfrak n(\ggo')=\RR^4$, part I.}
	\label{Nilshadow-R4}
\end{table}
\

	\begin{table}[h!]
	\def\arraystretch{1.5}
	\centering
	%	\resizebox{\textwidth}{!}
	\begin{tabular}{l|c|c}
		$\ggo=\mathfrak r_{4,\mu,\lambda}$ &  $\Der(\hgo_3\oplus\RR)$ & $\Der(\ngo_4)$ \\
		%\toprule

		\hline
		
		$\mathfrak r_{4,-1,\lambda}$ $-1\leq\lambda<0$  & $\begin{pmatrix}
	x_0 & 0 & 0 & 0 \\
	0 & -x_0 & 0 & 0 \\
	\frac{x_2}{2} & -\frac{x_1}{2} & 0 & 0 \\
	0 & 0 & 0 & \lambda x_0 \\
	\end{pmatrix}$ & \text{only} $\lambda=-1$: 
	$\begin{pmatrix}
	-x_0 & 0 & 0 & 0 \\
	0 & x_0 & 0 & 0 \\
	0 & -x_1 & 0 & 0 \\
	0 & 0 & -x_1 & -x_0 \\
	\end{pmatrix}$\\
		
		\hline
		
		$\mathfrak r_{4,\mu,\mu}$ $-1<\mu\leq1, \mu \neq 0$ &  $\begin{pmatrix}
	0 & 0 & 0 & 0 \\
	0 & \mu x_0 & 0 & 0 \\
	\frac{x_2}{2} & -\frac{x_1}{2} & \mu x_0 & 0 \\
	0 & 0 & 0 & x_0 \\
	\end{pmatrix}$ & \text{only} $\mu=\frac12$:
	$\begin{pmatrix}
	\frac{x_0}{2} & 0 & 0 & 0 \\
	0 & 0 & 0 & 0 \\
	0 & -x_1 & \frac{x_0}{2}  & 0 \\
	0 & 0 & -x_1 & x_0 \\
	\end{pmatrix}$\\
	
  & & \text{and} $\mu=1$:
	$\begin{pmatrix}
	0 & 0 & 0 & 0 \\
	0 & x_0 & 0 & 0 \\
	0 & -x_1 & x_0 & 0 \\
	0 & 0 & -x_1 & x_0 \\
	\end{pmatrix}$\\
		
		\hline
		
		$\mathfrak r_{4,\mu,-\mu}$, $-1 < \mu < 0$ &  $\begin{pmatrix}
	 \mu\frac{x_0}{2} & 0 & 0 & 0 \\
	0 &  -\mu\frac{x_0}{2} & 0 & 0 \\
	\frac{x_2}{2} & -\frac{x_1}{2} & 0 & 0 \\
	0 & 0 & 0 & x_0 \\
	\end{pmatrix}$ & $\mu=-\frac12$: $\begin{pmatrix}
	-\frac{x_0}2 & 0 & 0 & 0 \\
	0 & x_0 & 0 & 0 \\
	0 & -x_1 & \frac{x_0}2  & 0 \\
	0 & 0 & -x_1 & 0\\
	\end{pmatrix}$\\
	
		\hline
		
		$\mathfrak r_{4,\mu,1}$ $-1 < \mu <1, \mu \neq 0$ &  $\begin{pmatrix}
	x_0 & 0 & 0 & 0 \\
	0 & 0 & 0 & 0 \\
	\frac{x_2}{2} & -\frac{x_1}{2} &  x_0 & 0 \\
	0 & 0 & 0 & \mu x_0 \\
	\end{pmatrix}$ & \texttimes\\
	
		\hline
		
		$\mathfrak r_{4,\mu,1-\mu}$ $0<\mu<\frac12$ & $\begin{pmatrix}
	\mu x_0 & 0 & 0 & 0 \\
	0 & (1-\mu)x_0 & 0 & 0 \\
	\frac{x_2}{2} & -\frac{x_1}{2} & x_0 & 0 \\
	0 & 0 & 0 & 0 \\
	\end{pmatrix}$ & \texttimes\\
		
		\hline

    $\mathfrak r_{4,\mu,1+\mu}$ $-1<\mu<0$ $\mu\neq-\frac12$ &  $\begin{pmatrix}
	x_0 & 0 & 0 & 0 \\
	0 & \mu x_0 & 0 & 0 \\
	\frac{x_2}{2} & -\frac{x_1}{2} & (1+\mu)x_0 & 0 \\
	0 & 0 & 0 & 0 \\
	\end{pmatrix}$ & \texttimes \\
		
		\hline
		
		\bottomrule
	\end{tabular} 
	\caption{Simply transitive action for $\ggo$ with nilshadow $\ngo(\ggo^\prime) = \RR^4$, part II.}
	\label{Nilshadow-R4_continuation}
\end{table}

\

\begin{table}[h!]
	\def\arraystretch{1.47}
	\centering
	%	\resizebox{\textwidth}{!}
	\begin{tabular}{l|c|c}
		$\ggo$ &  $\Der(\hgo_3\oplus\RR)$ & $\ngo_4\rtimes\Der(\ngo_4)$ \\
		%\toprule
		\hline		
				
		$\mathfrak r\mathfrak r_3$  &  $\text{diag}(0,x_0,x_0,0)$ 
		%$\begin{pmatrix}
        %0 & 0 & 0 & 0 \\
        %0 & x_0 & 0 & 0 \\
        %0 & 0 & x_0 & 0 \\
        %0 & 0 & 0 & 0 \\
        %\end{pmatrix}$ 
        & \texttimes \\
		
		\hline

		   &  & $\lambda=-1$: 
        $\left((x_2,x_4,x_1,x_3),\begin{pmatrix}
        -x_0 & 0 & 0 & 0 \\
        0 & x_0 & 0 & 0 \\
        0 & -x_2 & 0 & 0 \\
        x_1 & 0 & -x_2 & -x_0 \\
        \end{pmatrix}\right)$\\
           & &  \\
        $\mathfrak r_{4,\lambda}$  & $\text{diag}(0,\lambda x_0,\lambda x_0,x_0)$ 
        %$\begin{pmatrix}
        %0 & 0 & 0 & 0 \\
        %0 & \lambda x_0 & 0 & 0 \\
        %0 & 0 & \lambda x_0 & 0 \\
        %0 & 0 & 0 & x_0 \\
        %\end{pmatrix}$ 
        & $\lambda=\frac12$:
        $\left((x_2,x_1,x_3,x_4),\begin{pmatrix}
        \frac{x_0}{2} & 0 & 0 & 0 \\
        0 & 0 & 0 & 0 \\
        cx_1 & (c-2)x_2 & \frac{x_0}{2}  & 0 \\
        (c-1)x_3 & 0 & (c-2)x_2 & x_0 \\
        \end{pmatrix}\right)$\\
           & & \text{with} $c \in \RR$, $c^2-c-1=0$ \\
        
           &  & $\lambda=1$: $\left((x_1,x_4,x_2,x_3),\begin{pmatrix}
        0 & 0 & 0 & 0 \\
        0 &  x_0 & 0 & 0 \\
        x_4 & 0 &  x_0 & 0 \\
        0 & 0 & 0 & x_0 \\
        \end{pmatrix}\right)$ \\
		\hline

		$\mathfrak d_4$  & $\text{diag}(x_0,-x_0,0,0)$ 
		%$\begin{pmatrix}
        %x_0 & 0 & 0 & 0 \\
        %0 & -x_0 & 0 & 0 \\
        %0 & 0 & 0 & 0 \\
        %0 & 0 & 0 & 0 \\
        %\end{pmatrix}$ 
        & \texttimes \\
		
		\hline
		
		 &  
		%$\begin{pmatrix}
        %\lambda x_0 & 0 & 0 & 0 \\
        %0 & (1-\lambda)x_0 & 0 & 0 \\
        %0 & 0 & x_0 & 0 \\
        %0 & 0 & 0 & 0 \\
        %\end{pmatrix}$ 
        & $\lambda=\frac12$:
        $\left( (x_1,x_4,x_2,x_3), 
        \begin{pmatrix}
        \frac{x_0}{2} & 0 & 0 & 0 \\
        0 & 0 & 0 & 0 \\
        0 & -x_1 & \frac{x_0}{2} & 0 \\
        - x_2 & 0 & -x_1 & x_0 \\
        \end{pmatrix}\right)$\\
        
        $\mathfrak d_{4,\lambda}$  & $\text{diag}(\lambda x_0,(1-\lambda)x_0,x_0,0)$ &  \\
          
		 &   & $\lambda=2$:
        $\left( (x_2,x_1,x_3,x_4), 
        \begin{pmatrix}
        -x_0 & 0 & 0 & 0 \\
        0 & 2x_0 & 0 & 0 \\
        c x_1 & (c-2) x_2 & x_0 & 0 \\
        (c-1) x_3 & 0 & (c-2) x_2 & 0 \\
        \end{pmatrix}\right)$\\
		& & \text{with} $c \in \RR$, $c^2 - c - 1 = 0$\\
		\hline
		
		$\mathfrak d'_{4,\lambda}$  &  
		$\begin{pmatrix}
        \lambda x_0 & x_0 & 0 & 0 \\
        -x_0 & \lambda x_0 & 0 & 0 \\
        0 & 0 & 2\lambda x_0 & 0 \\
        0 & 0 & 0 & 0 \\
        \end{pmatrix}$ 
        & \texttimes\\
		
		\hline 
		
		\bottomrule
	\end{tabular} 
	\caption{Simply transitive actions for $\ggo$ with nilshadow $\mathfrak n(\ggo')=\hgo_3\oplus \RR$}
	\label{Nilshadow-h3R}
    \end{table}

\

	\begin{table}[h!]
	\def\arraystretch{1.7}
	\centering
	%	\resizebox{\textwidth}{!}
	\begin{tabular}{l|c|c}
    $\ggo$ & $\hgo_3\oplus\RR\rtimes\Der(\hgo_3\oplus\RR)$ & $\Der(\ngo_4)$ \\
		%\toprule
		\hline
       $\mathfrak{r}_{4}$  &  $\left( (x_1,x_2,x_4,x_3), 
        \begin{pmatrix}
        0 & 0 & 0 & 0 \\
        0 & x_0 & 0 & 0 \\
        0 & -x_1 & x_0 & x_1 \\
        0 & x_1 & 0 & x_0 \\
        \end{pmatrix}\right)$ & 
        $\begin{pmatrix}
        0 & 0 & 0 & 0 \\
        0 & x_0 & 0 & 0 \\
        0 & 0 & x_0 & 0 \\
        0 & 0 & 0 & x_0 \\
        \end{pmatrix}$ \\
		
		\hline
		
		$\mathfrak{h}_{4}$  & $\left( (x_1,x_3,x_4,x_2), 
        \begin{pmatrix}
        x_0 & 0 & 0 & 0 \\
        -x_2 & x_0 & 0 & 0 \\
        0 & 0 & 2x_0 & 0 \\
        0 & 0 & 0 & 0 \\
        \end{pmatrix}\right)$ & $\begin{pmatrix}
        x_0 & 0 & 0 & 0 \\
        0 & 0 & 0 & 0 \\
        0 & 0 & x_0 & 0 \\
        0 & 0 & 0 & 2x_0 \\
        \end{pmatrix}$ \\

		\hline
		
		\bottomrule
	\end{tabular} 
	\caption{Simply transitive actions for $\ggo$ with nilshadow $\mathfrak n(\ggo')=\mathfrak{n}_4$ }
	\label{Nilshadow-n4}
\end{table}

\begin{table}[h!]
	\def\arraystretch{1.5}
	\centering
	%	\resizebox{\textwidth}{!}
	\begin{tabular}{l|c|c|c}
		$\ggo$ & $\mathfrak n(\ggo')$ & $\hgo_3\oplus \RR$ & $\ngo_4$ \\
		%\toprule
		\hline
		$\mathfrak r\mathfrak h_3$ & $\mathfrak r\mathfrak h_3$ & \checkmark & \checkmark \\
		
		\hline
		
		$\mathfrak n_4$ & $\mathfrak n_4$ & \checkmark & \checkmark \\
		
		\hline
		
		$\mathfrak r\mathfrak r_3$  & $\mathfrak r\mathfrak h_3$ & \checkmark & \texttimes \\
		
		\hline
		
		$\mathfrak r\mathfrak r_{3,\lambda}$  & $\RR^4$& $\lambda \in \left\{  \pm1, 0 \right\}$ & \texttimes \\
		
		\hline

		$\mathfrak r\mathfrak r'_{3,\lambda}$ & $\RR^4$& $\lambda=0$ & \texttimes\\
		
		\hline
		
		$\mathfrak r_2\mathfrak r_2$  &  $\RR^4$& \texttimes & \texttimes\\
		
		\hline
		
		$\mathfrak r'_2$  &  $\RR^4$& \texttimes & \texttimes\\
		
		\hline
		
		$\mathfrak r_4$  & $\ngo_4$ & \checkmark & \checkmark \\
		
		\hline
		
		$\mathfrak r_{4,\lambda}$  & $\mathfrak r\mathfrak h_3$& \checkmark & $\lambda \in \left\{\pm1,\frac12\right\}$ \\
		
		\hline
		
		$\mathfrak r_{4,\mu,\lambda}$  & $\RR^4$ & for specific values of $(\mu,\lambda)$ & for specific values of $(\mu,\lambda)$\\
		
		\hline
		
		$\mathfrak r'_{4,\gamma,\delta}$ & $\RR^4$& $\delta=0$ or $\gamma=2\delta$ & \texttimes \\

		\hline
		
		$\mathfrak d_4$  & $\mathfrak r\mathfrak h_3$& \checkmark & \texttimes \\
		
		\hline
		
		$\mathfrak d_{4,\lambda}$ & $\mathfrak r\mathfrak h_3$&  \checkmark & $\lambda \in \left\{ \frac12, 2\right\}$\\
		
		\hline
		
		$\mathfrak d'_{4,\lambda}$ & $\mathfrak r\mathfrak h_3$ &  \checkmark & \texttimes\\
		
		\hline 
		
		$\mathfrak h_{4}$ & $\mathfrak n_4$& \checkmark & \checkmark \\
		
		\hline
		\bottomrule
	\end{tabular} 
	\caption{Overview of existence of simply transitive actions from $\ggo$ to $\hgo_3\oplus \RR$ and $\ngo_4$}
		\label{table:overview}
\end{table}

 \clearpage
	\bibliography{ref}
	\bibliographystyle{plain}

\end{document}